\theoremstyle{plain}
\newtheorem{theorem}{Theorem}
\newtheorem{corollary}{Corollary}
\newtheorem{lemma}{Lemma}
\newtheorem{proposition}{Proposition}
\theoremstyle{definition}
\theoremstyle{remark}
\numberwithin{equation}{section}
\newcommand{\R}{\mathbb R}
\newcommand{\Z}{\mathbb Z}
\newcommand{\C}{\mathbb C}
\newcommand{\Rn}{\mathbb R^n}
\begin{document}

\title[Uncertainty Principle, \dots]{Uncertainty Principle of Morgan type and
Schr\"odinger Evolutions}
\author{L. Escauriaza}
\address[L. Escauriaza]{UPV/EHU\\Dpto. de Matem\'aticas\\Apto. 644, 48080
Bilbao, Spain.}
\email{luis.escauriaza@ehu.es}
\thanks{The first and fourth authors are supported  by MEC grant,
MTM2004-03029, the second and third authors by NSF grants DMS-0456583 and
DMS-0456833 respectively}
\author{C. E. Kenig}
\address[C. E. Kenig]{Department of Mathematics\\University of
Chicago\\Chicago, Il. 60637 \\USA.}
\email{cek@math.uchicago.edu}
\author{G. Ponce}
\address[G. Ponce]{Department of Mathematics\\
University of California\\
Santa Barbara, CA 93106\\
USA.}
\email{ponce@math.ucsb.edu}
\author{L. Vega}
\address[L. Vega]{UPV/EHU\\Dpto. de Matem\'aticas\\Apto. 644, 48080
Bilbao, Spain.}
\email{luis.vega@ehu.es}
\keywords{Schr\"odinger evolutions}
\subjclass{Primary: 35Q55}
\begin{abstract}
We prove unique continuation properties for solutions of evolution Schr\"odinger equation with 
time dependent potentials. In the case of the free solution these correspond to uncertainly principles referred to as being of Morgan type. As  an application of our method we also obtain results concerning the possible  concentration profiles of  solutions of semi-linear Schr\"odinger equations. 
\end{abstract}
\maketitle
\begin{section}{Introduction}\label{S: Introduction}
In this paper we continue our study initiated in 
\cite{ekpv08} \cite{ekpv08b}, and \cite{ekpv09} on unique continuation properties of
solutions of Schr\"odinger equations of the form
\begin{equation}
\label{E:1.1}
i \partial_t u + \triangle u = V(x,t) u,\;\;\;\;\;\; (x,t)\in \R^n\times
[0,1].
\end{equation}

The goal is to obtain sufficient conditions on the behavior of the
solution $u$
at two different times and on the potential $V$ which guarantee that
$u\equiv 0$ in $\Rn\times [0,1]$.
Under appropriate assumptions this result will allow us to extend these
conditions to the difference $v=u_1-u_2$ of two solutions $u_1,\;u_2$ of
semi-linear
Schr\"odinger equation
\begin{equation}
\label{E: NLS}
i \partial_t u + \triangle u = F(u,\overline u),
\end{equation}
from which one can infer that $u_1\equiv u_2$, see \cite{ekpv06}.
\vskip.1in

Defining the Fourier transform of a function $\,f\,$ as
$$
\widehat f(\xi)=(2\pi)^{-n/2}\;\int_{\Rn} \,e^{-i\,\xi\cdot x}
\,f(x)\,dx,
$$
the identity
\begin{equation}
\label{formula-l}
\begin{aligned}
&e^{it\Delta}u_0(x)=u(x,t)\\
\\
&= (4\pi it)^{-\frac n 2} \int_{\Rn}e^{\frac{ i |x-y|^2}{4t}}u_0(y)\, dy=
\left(2\pi it\right)^{-\frac
n2}e^{\frac{i|x|^2}{4t}}\widehat{e^{\frac{i|\,\cdot\,|^2}{4t}}u_0}\left(\frac
x{2t}\right),
\end{aligned}
\end{equation}
tells us that this kind of results for the free solution of the
Schr\"odinger equation with data $u_0$
$$
i\partial_tu+\triangle
u=0,\;\;\;\;\;\;u(x,0)=u_0(x),\;\;\;\;(x,t)\in\Rn\times \R,
$$
is related to uncertainty principles. In this regard, one has the well
known result of G. H. Hardy \cite{hardy} for $n=1$ and its extension to higher dimensions $n\geq 2$ established in \cite{SisuT} :
\begin{equation}
\label{hardy}
\text{If} \;\;\;f(x)=O(e^{-\frac{|x|^2}{\beta^2}}),\;\;\widehat
f(\xi)=O(e^{-\frac{4|\xi|^2}{\alpha^2}}),\;\;\text{and}\;\;\alpha\,\beta<4,\;\text{then}\;f\equiv
0.
\end{equation}
Moreover, if $\,\alpha\,\beta =4$, then $f(x)=c\,e^{-\frac{|x|^2}{\beta^2}}$.

Using \eqref{formula-l},  \eqref{hardy} can be rewritten in terms of the free solution
of the Schr\"odinger equation :
$$
\text{If} \;\;\;u_0(x)=O(e^{-\frac{|x|^2}{\beta^2}}),\;\;
e^{it\Delta}u_0(x)=O(e^{-\frac{|x|^2}{\alpha^2}}),\;\;\text{and}\;\;\alpha\,\beta<4t,\;\text{then}\;u_0\equiv
0.
$$
Also, if $\alpha\,\beta=4t$, then $u_0(x)=c
\,e^{-i\frac{|x|^2}{4t}}\,e^{-\frac{|x|^2}{\beta^2}}$.

The corresponding result in terms of the $L^2(\Rn)$-norm  was established by Sitaram, Sundari, and Thangavelu in \cite{SisuT}:
$$
\text{If}\;\;\;\;e^{\frac{|x|^2}{\beta^2}}\,f(x),\;\;e^{\frac{4|\xi|^2}{\alpha^2}}\widehat
f(\xi)\in L^2(\mathbb R^n),\;\;\text{and}\;\;\alpha\,\beta\leq
4,\;\text{then}\;f\equiv 0.
$$

In terms of the free solution of the Schr\"odinger equation the
$L^2$-version of Hardy Uncertainty Principle
says :
\begin{equation}
\label{hardy-l2}
\text{If}\;\;
\;e^{\frac{|x|^2}{\beta^2}}\,u_0(x),\;\;e^{\frac{|x|^2}{\alpha^2}}\,e^{it\Delta}\,u_0(x)\in
L^2(\mathbb R^n),\;\;\text{and}\;\;\alpha\,\beta\leq
4t,\;\text{then}\;u_0\equiv 0.
\end{equation}

In \cite{ekpv09} we proved the following result:

\begin{theorem}\label{Theorem A1}
Given  any solution  $u \in C([0,T] :L^2(\Rn))\,$ of
\begin{equation}
\label{ivp11}
\partial_tu=i\left(\triangle u+V(x,t)u\right),\,\;\;\;
\text{in}\;\;\, \Rn\times [0,T],
\end{equation}
with  $V=V(x,t)$ complex valued, bounded ( i.e.
$\|V\|_{L^{\infty}(\R^n\times[0,T])}\leq C$)
and
\begin{equation}
\label{14a}
\lim_{R\rightarrow +\infty}\|V\|_{L^1([0,T] : L^\infty(\Rn\setminus B_R)}=0,
\end{equation}
or $V(x,t)=V_1(x)+V_2(x,t)$ with $V_1$ real valued and $V_2$ complex valued with
$$
\sup_{t\in [0,T]}\,\|e^{k|x|^2}\,V_2(\cdot,t)\|_{L^{\infty}(\mathbb
R^n)}<\infty,\;\;\;\;\forall k\in\mathbb Z^+,
$$
satisfying that for some $t\in (0,T]$
$$
e^{\frac{|x|^2}{\beta^2}}\,u_0,\;\;\;e^{\frac{|x|^2}{\alpha^2}}\,u(x,t)\in
L^2(\mathbb R^n),
$$
with $\alpha\,\beta<4 t$,
then $\,u_0\equiv 0$.
\end{theorem}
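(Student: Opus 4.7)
The plan is to deduce Theorem A1 from a combined log-convexity / lower-bound scheme for Gaussian-weighted $L^2$ norms along the flow, reducing the sharp Hardy threshold $\alpha\beta<4t$ to a concrete monotonicity statement.

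\textbf{Reduction.} By a parabolic rescaling in $t$ we may assume $T=1$ and that the two-time decay holds at $t=1$ with $\alpha\beta<4$. The Appell-type conformal identity underlying \eqref{formula-l} suggests looking at
\[
 \tilde u(y,s)=\frac{1}{(1+s)^{n/2}}\,u\!\Big(\frac{y}{1+s},\frac{s}{1+s}\Big)\,e^{i|y|^2/4(1+s)},
\]
or a similar conjugation, which converts a solution on $[0,1]$ with Gaussian decay at both endpoints into a solution on a half-line with a new potential $\tilde V$ still satisfying the hypotheses of the theorem. The point is that $\alpha\beta<4$ translates into a definite gap that survives the transformation.

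\textbf{Log-convexity.} For a weight $\phi(x,t)=\gamma(t)|x|^2$ with $\gamma(0)=1/\beta^2$, $\gamma(1)=1/\alpha^2$, set $f(x,t)=e^{\phi(x,t)}u(x,t)$ and $H(t)=\|f(\cdot,t)\|_{L^2(\Rn)}$. Conjugation gives
\[
 e^{\phi}(i\partial_t+\Delta)e^{-\phi}=S+A,
\]
with $S$ symmetric and $A$ skew-symmetric, plus a multiplication by $V$. The standard computation for abstract evolutions $\partial_t f=(\mathcal S+\mathcal A)f+\mathcal E$ yields
\[
 \frac{d^2}{dt^2}\log H(t)\;\ge\;\frac{\langle ([\mathcal S,\mathcal A]+\mathcal S_t)f,f\rangle}{\|f\|^2}\;-\;C(V),
\]
and for the explicit $S,A$ arising here one checks that the commutator term is nonnegative provided $\gamma$ is chosen with a prescribed sign of $\ddot\gamma$ (a Riccati-type constraint). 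This produces convexity of $\log H(t)$ modulo errors from $V$; the bounded part $V_1$ (and the bounded component of $V$ in the first hypothesis) contributes a Grönwall-absorbable correction, while the unbounded part $V_2$ is swallowed by the Gaussian weight because $\|e^{k|x|^2}V_2\|_\infty<\infty$ for every $k$.

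\textbf{Lower bound and upgrading of decay.} Log-convexity combined with the two-time hypothesis $H(0),H(1)<\infty$ yields $H(t)<\infty$ throughout $[0,1]$, with explicit interpolated control. Iterating the estimate with weights $\gamma_k(t)$ whose endpoint values are slightly sharpened using the strict gap $\alpha\beta<4$, one bootstraps to obtain $\|e^{k|x|^2}u(\cdot,t)\|_{L^2}<\infty$ for every $k>0$ and every $t\in(0,1)$. Such arbitrarily fast Gaussian decay of a Schrödinger solution together with a Carleman-type lower bound (of the kind proved in the authors' earlier papers \cite{ekpv08},\cite{ekpv08b},\cite{ekpv09}) --- which forces a nontrivial solution to have $H(t)$ not decay faster than a definite Gaussian rate --- is incompatible unless $u\equiv 0$.

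\textbf{Main obstacle.} The sharpness of the threshold $\alpha\beta<4$ depends on a delicate choice of $\gamma(t)$: too flat and the commutator $[\mathcal S,\mathcal A]$ fails to control the error from $V$, too steep and the convexity identity breaks down. Calibrating this weight so that the strict inequality in the hypothesis is exactly what is needed to close the Grönwall loop --- and justifying the formal computations on weighted $L^2$ (via approximation by truncated/mollified weights and a careful passage to the limit) --- is the technical heart of the argument.
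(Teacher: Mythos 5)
The paper does not itself prove Theorem~\ref{Theorem A1}: it is quoted verbatim from \cite{ekpv09}. What \emph{is} in this paper is a description of the method (Remark~(iv) after Theorem~\ref{Theorem 3}): an upper estimate from the two-time decay hypotheses via Lemma~\ref{ultimo}, combined with a lower estimate from the positivity of a commutator (Lemma~\ref{Lemma ole}). Your sketch does capture the right ingredients at the top level --- Appell transformation, log-convexity of a Gaussian-weighted $L^2$ norm, bootstrap, Carleman lower bound --- so it is not a wrong approach. But there is a genuine gap at the step you describe as ``Log-convexity combined with the two-time hypothesis $H(0),H(1)<\infty$ yields $H(t)<\infty$ throughout $[0,1]$.'' That inference is circular: to compute $\tfrac{d^2}{dt^2}\log H$ and invoke convexity you must already know that $H(t)$ is finite, differentiable, and that the intermediate weighted integrals (needed for the symmetric/skew-symmetric integrations by parts) converge at \emph{all} $t\in(0,1)$. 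Truncating or mollifying the weight does not by itself let you pass to the limit, because the endpoint hypotheses only control $t=0$ and $t=1$.

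The device the authors use to break this circle --- and which your proposal omits --- is precisely Lemma~\ref{ultimo}: an a priori bound
\[
 \sup_{0\le t\le 1}\|e^{\lambda\cdot x}u(\cdot,t)\|_2 \;\le\; c_n\bigl(\|e^{\lambda\cdot x}u_0\|_2+\|e^{\lambda\cdot x}u_1\|_2+\cdots\bigr)
\]
with a constant $c_n$ \emph{independent of $\lambda\in\Rn$}, obtained with linear exponential weights $e^{\lambda\cdot x}$ which commute benignly with the Schr\"odinger flow. Integrating in $\lambda$ against a suitable kernel (the analogue of \eqref{est1}) one recovers a priori persistence of Gaussian decay at interior times, and only then can the formal convexity computation be made rigorous. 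This is also where the structural hypothesis $\lim_{R\to\infty}\|V\|_{L^1_t L^\infty(\Rn\setminus B_R)}=0$ is actually used: the far part of $V$ is made $L^1_tL^\infty_x$-small to fit the smallness condition \eqref{hyp2} in Lemma~\ref{ultimo}, while the near part is moved into the inhomogeneity $\mathbb F$ and controlled by energy estimates --- your ``Gr\"onwall-absorbable correction'' description does not account for this splitting. Secondly, you should flag that reaching the sharp constant $\alpha\beta<4t$ is considerably harder than the ``choose $\gamma$ with a Riccati constraint'' picture suggests; the earlier convexity arguments of \cite{ekpv08b} give only unsharp thresholds, and \cite{ekpv09} required an additional iteration/self-improvement mechanism for the interior Gaussian rate. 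As written, your outline would plausibly yield a theorem with a non-optimal constant in place of $4t$, which is a materially weaker statement than the one being proved.
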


Notice that  Theorem \ref{Theorem A1} recovers the $L^2$-version of Hardy
Uncertainty Principle \eqref{hardy-l2} for solutions of the IVP
\eqref{ivp11},
except for the limiting case $\alpha\,\beta=4t$  for which we prove that the
corresponding result fails.
More precisely, in \cite{ekpv09} it was shown that there exist (complex-valued)
bounded potentials $V(x,t)$ satisfying
\eqref{14a} for which there exist nontrivial solutions $u \in C([0,T]
:L^2(\Rn))$ of \eqref{ivp11}
satisfying
$$
e^{\frac{|x|^2}{\beta^2}}\,u_0,\;\;\;e^{\frac{|x|^2}{\alpha^2}}\,u(x,T)\in
L^2(\mathbb R^n),
$$
with $\alpha\,\beta=4T$.

\vskip.1in

This work is motivated by a different kind of uncertainty principles written
in terms of the free
solution of the Schr\"odinger equation. As it was mentioned, we are
interested in its extentions
to solutions of the equation in \eqref{E:1.1},
and  to the difference of two solutions of the nonlinear equation
\eqref{E: NLS}.

First,  one has the  result due to Beurling-H\"ormander \cite{Ho}:  If
$f\in L^1(\mathbb R)$ and
\begin{equation}
\label{beurling}
\int_{\mathbb R} \int_{\mathbb R} |f(x)| |\widehat f(\xi)| e^{
|x\,\xi|}\,dx\,d\xi<\infty, \;\;\;\text{then}
\;\;\;f\equiv 0.
\end{equation}
This  was extended to higher dimensions $n\geq 2$ in \cite{bonami2} and \cite{ray} : If $f\in L^2(\mathbb R^n), n\geq 2$ and
\begin{equation}
\label{beurlingn}
\int_{\mathbb R^n} \int_{\mathbb R^n} |f(x)| |\widehat f(\xi)| e^{
|x\,\cdot \xi|}\,dx\,d\xi<\infty, \;\;\;\text{then}
\;\;\;f\equiv 0.
\end{equation}

We observe that \eqref{beurling}, \eqref{beurlingn} implies : If 
$p\in(1,2)$, $\,1/p+1/q=1$, $\,\alpha,\,\beta>0$, and 
\begin{equation}
\label{primera}
 \int_{\mathbb R^n}|f(x)|\, e^{\frac{\alpha^p|x|^p}{p}}dx \,+
\,\int_{\mathbb R^n} |\widehat
f(\xi)| \,e^{\frac{\beta^q|\xi|^q}{q}}d\xi<\infty,\;\;\alpha \beta\geq 1\;\Rightarrow \;f\equiv 0,
 \end{equation}
 or in terms of the solution of the free Schr\"odinger
equation :
\vskip.05in
If $u_0\in L^1(\mathbb R)$ or $u_0\in L^2(\mathbb R^n)$, if $n\geq 2$,  and for
some $\,t\neq 0$
 \begin{equation}
 \label{pq}
 \int_{\mathbb R^n}\;|u_0(x)|\, e^{\frac{\alpha^p|x|^p}{p}}dx \,+
\,\int_{\mathbb R^n}\,
|\,e^{it\Delta}u_0(x)|
\,e^{\frac{\beta^q|x|^q}{q(2t)^q}}dx<\infty,\;\;\;
\alpha\,\beta\geq 1,
\end{equation}
then $\;u_0\equiv 0$. 
\vskip.1in

The following  related (and stronger in one dimension) result was
established by Bonami, Demange and Jaming \cite{bonami2} (for further results
see \cite{bonami1} and references therein): Let $f\in
L^2(\mathbb R^n)$, $1<p<2\;$ and $\,1/p+1/q=1\,$ such that for some $j=1,..,n$,
\begin{equation}
\label{bonami}
 \int_{\mathbb R^n}
|f(x)|e^{\frac{\alpha^p|x_j|^p}{p}}dx<\infty\;\;+\;\;\int_{\mathbb R^n}
|\widehat f(\xi)|e^{\frac{\beta^q|\xi_j|^q}{q}}d\xi<\infty.
\end{equation}
 If $\;\alpha\,\beta>|\cos(p\pi/2)|^{1/p}$, then
$\;f\equiv 0$. If $\,\alpha \beta <|\cos(p\pi/2)|^{1/p}$ there exist 
non-trivial functions satisfying \eqref{bonami} for all $j=1,..,n$. 
\vskip.05in

This kind of  uncertainty principles involving  conjugate exponent $p,\,q$ were first studied by G. W. Morgan in \cite{Mo}.
\vskip.1in

In \cite{GeShi} Gel'fand and Shilov considered the class $Z^p_p,\,p\geq 1$
defined as  the space of all functions $\varphi(z_1,..,z_n)$ which are
analytic for all values of $z_1,..,z_n\in \mathbb C$ and such that
$$
|\varphi(z_1,..,z_n)|\leq C_0\, e^{\sum_{j=1}^n\,\epsilon_j\,C_j\,|z_j|^p},
$$
where the $C_j,\,j=0,1,..,n$ are positive constants and $\epsilon_j=1$ for
$z_j$ non-real and $\epsilon_j=-1$ for $z_j$ real, $j=1,..,n$, and showed
that the  Fourier transform of the function space $Z_p^p$ is the space
$Z_q^q$,
with $\,1/p+1/q=1$.
\vskip.05in
Notice that  the class $Z_p^p$ with $p\geq 2$ is closed respect to
multiplication by $\,e^{i c |x|^2}$. Thus, if $u_0\in Z^p_p,\,p\geq 2$, then by \eqref{formula-l}
one has that $\,|e^{it\Delta}u_0(x)|\leq d(t)\,e^{-a(t)|x|^q}$, for some functions
$\,d,\,a\,:\,\,\mathbb R\to(0,\infty)$.

Our main result in this paper is the following:

\begin{theorem}\label{Theorem 1}
Given $\,p\in(1,2)$ there exists $\,M_p>0$ such that  for any solution  $u
\in C([0,1] :L^2(\Rn))$ of
\begin{equation*}
\label{E: 1.111}\partial_tu=i\left(\triangle u+V(x,t)u\right),\;\;\;\text{in}\;\;\;\;\;\Rn\times [0,1],
\end{equation*}
with  $V=V(x,t)$  complex valued,  bounded ( i.e.
$\|V\|_{L^{\infty}(\R^n\times[0,1])}\leq C$)
and
\begin{equation}
\label{14}
\lim_{R\rightarrow +\infty}\|V\|_{L^1([0,1] : L^\infty(\Rn\setminus B_R))}=0,
\end{equation}
satisfying for some constants $\,a_0,\,a_1,\,a_2>0$
\begin{equation}
\label{12}
\int_{\mathbb R^n} |u(x,0)|^2\,e^{2a_0 |x|^p}dx < \infty,
\end{equation}
and for any $k\in\Z^+$
\begin{equation}
\label{13}
\int_{\mathbb R^n} |u(x,1)|^2\,e^{2k |x|^p}dx < a_2 e^{2 a_1 k^{q/(q-p)}},
\end{equation}
$1/p+1/q=1$, if
\begin{equation}
\label{conditionp}
\,a_0\,a_1^{(p-2)} > M_p,
\end{equation}
then $\,u\equiv 0$.
\end{theorem}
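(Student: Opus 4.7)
\medskip

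\noindent\textbf{Proof plan.} I will follow the EKPV framework of \cite{ekpv08,ekpv08b,ekpv09}, already used to obtain Theorem~\ref{Theorem A1} in the Gaussian case $p=2$, and adapt it to the Morgan pair $(p,q)$ with $\tfrac{1}{p}+\tfrac{1}{q}=1$ and $1<p<2$. The exponent $q/(q-p)=1/(2-p)$ appearing in \eqref{13} is the precise Legendre dual rate: pointwise optimisation of $k|x|^p-a_1 k^{q/(q-p)}$ in $k>0$ reaches a maximum proportional to $a_1^{(p-2)/(p-1)}|x|^q$. Thus \eqref{13} encodes a family of $L^2$-weighted bounds equivalent, after optimisation in $k$, to an effective $q$-decay at $t=1$ with constant $\sim a_1^{(p-2)/(p-1)}$. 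The hypothesis $a_0 a_1^{p-2}>M_p$ is exactly the product of this effective rate with the $p$-rate $a_0$ at $t=0$, so the problem is an asymmetric $(p,q)$ analogue of Hardy's principle.

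The heart of the argument is a Carleman inequality for the operator $\partial_t-i(\triangle+V)$ with a weight $e^{\phi_k(x,t)}$ chosen to interpolate between $a_0|x|^p$ at $t=0$ and $k|x|^p$ at $t=1$. Hypothesis \eqref{14} on $V$ is tailored precisely so that the potential becomes an absorbable error in this estimate, uniformly in spatial truncation. Coupling the Carleman inequality with energy and positional/variance identities in the spirit of \cite{ekpv09} should produce a logarithmic convexity property
\[
  H_k(t)\le C\,H_k(0)^{1-\theta(t)}\,H_k(1)^{\theta(t)},\qquad H_k(t):=\int_{\Rn}|u(x,t)|^2 e^{2\phi_k(x,t)}\,dx,
\]
transferring the endpoint decay to every interior time, with an explicit, $k$-tracked dependence of the constants.

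Running this family of inequalities over all $k\in\Z^{+}$ and inserting \eqref{12}, \eqref{13}, one obtains on an interior slice $t=t_0$ a bound on $\|e^{\phi_k(\cdot,t_0)}u(\cdot,t_0)\|_{L^2(\Rn)}$ whose $k$-growth is governed by the Legendre balance described in the first paragraph. Under the sharp threshold $a_0 a_1^{p-2}>M_p$ the interpolated weight $\phi_k(\cdot,t_0)$ grows in $k$ strictly faster than the admissible $e^{Ck^{1/(2-p)}}$ rate, forcing $u(\cdot,t_0)\equiv 0$ and hence $u\equiv 0$ by standard backward-forward uniqueness for \eqref{E:1.1}. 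The principal obstacle I anticipate is the Carleman step itself: unlike the quadratic case $p=2$, the weight $|x|^p$ with $1<p<2$ is only partially convex and non-smooth at the origin, so $\phi_k$ must be mollified near $x=0$ and tuned both in $t$ and $k$ so that the principal commutator $[\partial_t-i\triangle,\phi_k]$ remains positive modulo lower order terms controlled by \eqref{14}; carefully tracking the constants through this tuning is what will produce the explicit value $M_p$.
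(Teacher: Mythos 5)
Your proposal identifies the right Legendre-duality interpretation of the exponent $q/(q-p)=1/(2-p)$ in \eqref{13}, and the high-level goal (obtain an upper bound that contradicts a lower bound when $a_0 a_1^{p-2}$ is large enough) is correct. However, the mechanism you propose is not the one the paper uses, and it contains a genuine gap at precisely the point you flag as your ``principal obstacle.''

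The paper never runs a Carleman or log-convexity inequality with a time-interpolated weight $e^{\phi_k(x,t)}$ between $a_0|x|^p$ and $k|x|^p$. Instead it begins with the conformal (Appell) transformation: choosing $\gamma=(ka_0)^{1/2}$, $\alpha=a_0^{1/p}$, $\beta=k^{1/p}$ turns $u$ into a family $\widetilde u_k$ solving a Schr\"odinger equation with transformed potential $\widetilde V$, and---crucially---the same weight $e^{\gamma|x|^p}$ appears at both $t=0$ and $t=1$ for $\widetilde u_k$. This single device eliminates the asymmetry you are trying to handle by interpolation, at the cost of making the potential bounds and the quantities $A_k$ depend on $k$, which is then tracked as $k\uparrow\infty$. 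This is explained in Remark (iv) in the introduction. Without this step your interpolated-weight Carleman estimate has to be built from scratch with a non-quadratic, time-dependent weight, and you offer no concrete way to obtain positivity of the commutator there.

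Second, even after symmetrization, the paper avoids a power-weight Carleman inequality for the upper bounds. The estimate $\sup_{[0,1]}\|e^{\gamma|x|^p}\widetilde u(t)\|_2 \lesssim A_0+A_k + \cdots$ is obtained by combining the linear-weight result of Lemma~\ref{ultimo} (from \cite{kpv02}) with the Laplace-type representation
\[
 e^{\gamma|x|^p/p}\simeq \int_{\Rn} e^{\gamma^{1/p}\lambda\cdot x - |\lambda|^q/q}\,|\lambda|^{n(q-2)/2}\,d\lambda,
\]
proved in the Appendix via Stirling's formula. Integrating the family of exponential-in-$\lambda$ bounds against this kernel reconstructs the $e^{\gamma|x|^p}$ bound without any direct Carleman estimate for $|x|^p$. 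The log-convexity machinery you allude to (symmetric/skew-symmetric splitting, $\mathcal S_t+[\mathcal S,\mathcal A]$) is used in the paper only to upgrade the $L^2$ bound to a weighted gradient bound \eqref{estabove}, with the smoothed weight $\varphi\sim|x|^p$, not as the principal Carleman step. Finally, the lower bound in the paper comes from Lemma~\ref{Lemma ole}, which is a Carleman inequality with a \emph{quadratic} weight $e^{\sigma|\tfrac xR+\varphi(t)e_1|^2}$; you cannot replace that by a mollified $|x|^p$ weight and retain the required convexity without substantial new work. The condition \eqref{conditionp} then materializes not from a Legendre balance inside a single convexity inequality, but from matching $\sigma=d_nR^2\sim a_1 k^{1/(2-p)}$ against $\gamma R^p\sim a_0^{1/2} a_1^{p/2} k^{1/(2-p)}$ in \eqref{popov}. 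In summary, the Appell transformation, the Laplace representation of $e^{|x|^p}$, and the quadratic-weight Carleman estimate are the three load-bearing tools, and all three are absent from your plan.
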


\begin{corollary}\label{Corollary 2}
Given $\,p\in(1,2)$ there exists $N_p>0$ such that if
\newline $u\in C([0,1]:L^2(\mathbb R^n))$ is a solution of
$$
\partial_t u=i (\Delta u +V(x,t)u),
$$
with  $V=V(x,t)$  complex valued,  bounded ( i.e.
$\|V\|_{L^{\infty}(\R^n\times[0,1])}\leq C$) and 
$$
\lim_{R\to\infty} \,\int_0^1\,\sup_{|x|>R} |V(x,t)| dt=0,
$$
and there exist $\,\alpha,\,\beta>0$
\begin{equation}
\label{con1}
\int_{\mathbb R^n}
|u(x,0)|^2e^{2\,\alpha^p\,|x|^p/p}dx\;\,\,+\,\,\;\int_{\mathbb
R^n}
|u(x,1)|^2e^{2\,\beta^q\,|x|^q/q}dx<\infty,
\end{equation}
$\,1/p+1/q=1$, with
\begin{equation}
\label{conditionp2}
\;\alpha\,\beta > N_p,
\end{equation}
then $\;u\equiv 0$.
\end{corollary}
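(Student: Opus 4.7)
The plan is to deduce Corollary \ref{Corollary 2} directly from Theorem \ref{Theorem 1} by converting the single integrability hypothesis at $t=1$ involving the weight $e^{2\beta^q|x|^q/q}$ into the family of hypotheses \eqref{13} indexed by $k\in\Z^+$ involving the weights $e^{2k|x|^p}$. The bridge is Young's inequality applied with a scaling parameter tuned so that the $|x|^q$-part of the upper bound is exactly $\beta^q|x|^q/q$.

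First, I would set $a_0=\alpha^p/p$, so that hypothesis \eqref{12} of Theorem~\ref{Theorem 1} follows immediately from the first summand in \eqref{con1}. Next, for each $k\in\Z^+$ and a positive scaling parameter $\e$, I write $k|x|^p=(k\e^{-1})\cdot(\e|x|^p)$ and apply Young's inequality with conjugate exponents $q/p$ and $q/(q-p)$ to obtain
\begin{equation*}
k|x|^p\le \frac{p}{q}\,\e^{q/p}|x|^q+\frac{q-p}{q}\,\e^{-q/(q-p)}k^{q/(q-p)}.
\end{equation*}
Choosing $\e^{q/p}=\beta^q/p$ makes the coefficient of $|x|^q$ exactly $\beta^q/q$, giving
\begin{equation*}
\int_{\R^n}|u(x,1)|^2 e^{2k|x|^p}dx\le e^{2a_1 k^{q/(q-p)}}\int_{\R^n}|u(x,1)|^2 e^{2\beta^q|x|^q/q}dx,
\end{equation*}
with $a_1=\frac{q-p}{q}\bigl(p/\beta^q\bigr)^{p/(q-p)}$. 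Hence \eqref{13} holds with $a_2$ equal to the (finite) second integral in \eqref{con1}.

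It remains to check that the largeness condition \eqref{conditionp2} implies \eqref{conditionp}. Using the identity $q=p/(p-1)$ one computes $qp/(q-p)=p/(2-p)$ and $p/(q-p)=(p-1)/(2-p)$, so $a_1^{2-p}=\bigl(\tfrac{q-p}{q}\bigr)^{2-p}p^{p-1}\beta^{-p}$, and therefore
\begin{equation*}
a_0\,a_1^{p-2}=\frac{(\alpha\beta)^p}{p^{\,p}}\left(\frac{q}{q-p}\right)^{2-p}.
\end{equation*}
Thus, defining
\begin{equation*}
N_p:=M_p^{1/p}\,p\,\Bigl(\tfrac{q-p}{q}\Bigr)^{(2-p)/p},
\end{equation*}
the inequality $\alpha\beta>N_p$ is equivalent to $a_0 a_1^{p-2}>M_p$, and Theorem~\ref{Theorem 1} yields $u\equiv 0$.

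The step that requires the most care is the Young's inequality optimization, since all the constants depend on $p$, and one must carry the exponents through the identity $q=p/(p-1)$ without losing the correct power of $\alpha\beta$; no genuine analytic difficulty arises here, as the hypotheses on $V$ in Corollary~\ref{Corollary 2} are identical to those of Theorem~\ref{Theorem 1}, so no additional approximation or cut-off argument is needed.
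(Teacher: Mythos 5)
Your proposal is correct and follows essentially the same route as the paper: the paper obtains \eqref{13} by computing $\sup_x e^{2k|x|^p-2b|x|^q}$ at its critical point, which is exactly the Legendre-transform optimization that your Young's inequality step (with the optimally tuned $\e$) encodes, and indeed your $a_1=\tfrac{q-p}{q}(p/\beta^q)^{p/(q-p)}$ simplifies to the paper's $c_p b^{-p/(q-p)}$. The remaining arithmetic translating $a_0 a_1^{p-2}>M_p$ into $\alpha\beta>N_p$ matches the paper's.
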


As a direct  consequence of  Corollary \ref{Corollary 2}  we get the
following   result regarding the uniqueness of solutions for non-linear
equations of the form \eqref{E: NLS}.

\begin{theorem}
\label{Theorem 3}

Given $\,p\in(1,2)$ there exists $\,N_p>0$ such that  if
$$
u_1,\,u_2 \in C([0,1] : H^k(\R^n)),
$$
are strong solutions of \eqref{E: NLS}  with $k\in \Z^+$, $k>n/2$,
$F:\C^2\to \C$, $F\in C^{k}$  and $F(0)=\partial_uF(0)=\partial_{\bar
u}F(0)=0$, and there exist $\,\alpha,\,\beta>0$
such that
\begin{equation}
\label{con2}
e^{\alpha^p\,|x|^p/p}\left(u_1(0)-u_2(0)\right),\;\;\;\
e^{\beta^q\,|x|^q/q}\left(u_1(1)-u_2(1)\right) \in L^2(\R^n),
\end{equation}
$1/p+1/q=1$, with
\begin{equation}
\label{conditionp2b}
\,\alpha\,\beta > N_p,
\end{equation}
then $u_1\equiv u_2$.

\end{theorem}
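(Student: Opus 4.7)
The plan is to reduce Theorem~\ref{Theorem 3} to Corollary~\ref{Corollary 2} applied to the difference $v=u_1-u_2$. First I set $v=u_1-u_2\in C([0,1]:H^k(\mathbb R^n))$ and observe that
$$
i\partial_t v+\Delta v=F(u_1,\overline{u_1})-F(u_2,\overline{u_2}).
$$
Using the fundamental theorem of calculus on $s\mapsto F(u_2+sv,\overline{u_2+sv})$, and the fact that $F\in C^k$, the right-hand side can be written as $a(x,t)\,v+b(x,t)\,\overline v$, where $a,b$ are the expected integrals of $\partial_u F$ and $\partial_{\overline u}F$ along the segment joining $u_2$ to $u_1$. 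Since $\partial_u F(0)=\partial_{\overline u}F(0)=0$ and both derivatives are $C^{k-1}$, hence locally Lipschitz, we have $|a|+|b|\lesssim |u_1|+|u_2|$. The Sobolev embedding $H^k\hookrightarrow L^\infty$, available because $k>n/2$, then makes $a$ and $b$ bounded on $\mathbb R^n\times[0,1]$.

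Next I linearize by defining $V(x,t)=-a(x,t)-b(x,t)\,\overline{v(x,t)}/v(x,t)$ on the set $\{v\neq 0\}$ and $V(x,t)=0$ elsewhere. Because $|\overline v/v|=1$ wherever it is defined, $V$ is a bounded complex-valued potential, and $v$ satisfies the linear equation $\partial_t v=i(\Delta v+Vv)$ on $\mathbb R^n\times[0,1]$. To invoke Corollary~\ref{Corollary 2} I must verify
$$
\lim_{R\to\infty}\int_0^1\sup_{|x|>R}|V(x,t)|\,dt=0.
$$
Since $u_j\in C([0,1]:H^k(\mathbb R^n))$, the set $\{u_j(t):t\in[0,1]\}$ is compact in $H^k$, and using $H^k\hookrightarrow C_0(\mathbb R^n)$ (again $k>n/2$), it is compact in $C_0(\mathbb R^n)$. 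The Arzel\`a--Ascoli characterization of compact subsets of $C_0$ then gives $\sup_{t\in[0,1]}\sup_{|x|>R}|u_j(x,t)|\to 0$ as $R\to\infty$, and combined with $|V|\lesssim|u_1|+|u_2|$ this yields the required limit.

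Finally, the hypothesis \eqref{con2} is precisely
$$
\int_{\mathbb R^n}|v(x,0)|^2 e^{2\alpha^p|x|^p/p}\,dx+\int_{\mathbb R^n}|v(x,1)|^2 e^{2\beta^q|x|^q/q}\,dx<\infty,
$$
which is exactly condition \eqref{con1} of Corollary~\ref{Corollary 2} for $v$. Taking $N_p$ to be the constant supplied by that corollary and applying it under \eqref{conditionp2b} produces $v\equiv 0$, i.e.\ $u_1\equiv u_2$.

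The main obstacle is conceptual rather than computational: one must recognize that the pointwise difference $F(u_1,\overline{u_1})-F(u_2,\overline{u_2})$, which is only \emph{conjugate-linear} in $v$, can still be recast as a linear Schr\"odinger equation with \emph{bounded} potential via the unimodular factor $\overline v/v$, and that the vanishing of this potential at spatial infinity (required by Corollary~\ref{Corollary 2}) is delivered precisely by the hypothesis $k>n/2$ through $C_0$-compactness of the trajectories $\{u_j(t)\}_{t\in[0,1]}$.
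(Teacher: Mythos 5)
Your proposal is correct and takes essentially the same route as the paper, which simply applies Corollary~\ref{Corollary 2} with $u=u_1-u_2$ and $V=\bigl(F(u_1,\overline u_1)-F(u_2,\overline u_2)\bigr)/(u_1-u_2)$; your decomposition $a\,v+b\,\overline v$ followed by absorbing the unimodular factor $\overline v/v$ into the potential is precisely what makes that quotient bounded. The details you supply --- $|V|\lesssim|u_1|+|u_2|$ from $F(0)=\partial_uF(0)=\partial_{\overline u}F(0)=0$, boundedness and decay of $V$ at infinity from $H^k\hookrightarrow C_0$ and compactness of the trajectories --- are exactly the verifications the paper leaves implicit.
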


Notice that the conditions \eqref{conditionp} and  \eqref{conditionp2} are independent of the size of
the potential and that we do not assume any regularity on the potential $V(x,t)$.

It will be clear from our proof of Theorem \ref{Theorem 1} that  the
result in \cite{bonami2} \eqref{bonami} can be extended to our setting 
with an unsharp constant. More precisely,
\begin{corollary}\label{Corollary 4}
The results in Corollary \ref{Corollary 2} still hold with a different
constant $N_p>0$ if one replaces the hypothesis  \eqref{con1} by the one
dimensional version
\begin{equation}
\label{conn=1}
\int_{\mathbb R^n}
|u(x,0)|^2e^{2\,\alpha^p\,|x_j|^p/p}dx<\infty\,\;\,\,+\,\,\;\int_{\mathbb
R^n}
|u(x,1)|^2e^{2\,\beta^q\,|x_j|^q/q}dx<\infty
\end{equation}
for some $j=1,..,n$. 
\end{corollary}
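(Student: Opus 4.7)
The plan is to imitate the proof of Theorem~\ref{Theorem 1} and Corollary~\ref{Corollary 2} with the radial Carleman weight $e^{a(t)|x|^p/p}$ replaced by the anisotropic weight $e^{a(t)|x_j|^p/p}$. The hypothesis \eqref{conn=1} is tailored precisely to this weight: the first summand controls $\|e^{\alpha^p|x_j|^p/p}\,u(\cdot,0)\|_{L^2}$, while the second summand, via \eqref{formula-l} read coordinatewise, corresponds to decay in the conjugate Fourier variable $\xi_j$ of the initial data, which is exactly the setting of the Bonami--Demange--Jaming inequality \eqref{bonami}. Thus the reduction is not to one dimension but rather to an $n$-dimensional Carleman argument with a one-dimensional weight.

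First I would conjugate the equation by $e^{\phi(x,t)}$ with $\phi(x,t)=a(t)|x_j|^p/p$, obtaining an equation for $f=e^{\phi}u$ whose only nontrivial $\phi$-derivatives are $\partial_j\phi=a(t)|x_j|^{p-1}\operatorname{sgn}(x_j)$ and $\partial_j^2\phi=a(t)(p-1)|x_j|^{p-2}$. Splitting the conjugated operator into its symmetric and antisymmetric parts $S$ and $A$, the commutator $[S,A]$ that drives the log-convexity of $H(t)=\|f(\cdot,t)\|_{L^2}^2$ becomes essentially a one-dimensional computation in $x_j$. Because $\phi$ is still convex (as $p>1$) and $|\nabla\phi|^2=a(t)^2|x_j|^{2(p-1)}$ retains the required growth, the positivity exploited in the proof of Theorem~\ref{Theorem 1} is preserved. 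The potential $V$ contributes lower order terms that are absorbed using $\|V\|_{L^\infty}\le C$ together with \eqref{14}.

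Next I would establish log-convexity of $H(t)$, combine it with the endpoint bounds supplied by \eqref{conn=1}, and derive a growth estimate of the type \eqref{13} but with $|x_j|^p$ in place of $|x|^p$. Optimizing over the profile $a(t)$, as in the passage from Theorem~\ref{Theorem 1} to Corollary~\ref{Corollary 2}, produces a critical condition $\alpha\beta>N_p$ forcing $u\equiv 0$. The value of $N_p$ is strictly larger than the constant in Corollary~\ref{Corollary 2}: whenever the radial proof invokes the identity $|x|^p=|x|^p$, the directional proof must use the one-sided comparison $|x_j|\le |x|$, which is not an equality along extremizing directions and therefore widens the critical product by a factor depending on $p$. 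This is precisely the reason Corollary~\ref{Corollary 4} is stated with an unspecified constant.

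The main obstacle is to justify the Carleman manipulations rigorously when $\phi$ degenerates. For $p<2$ the weight $|x_j|^p$ has unbounded second derivative on the hyperplane $\{x_j=0\}$ and vanishes there, so the formal identities must first be carried out for a smooth approximation such as $\phi_{\e}(x,t)=a(t)(\e^2+x_j^2)^{p/2}/p$, with the limit $\e\to 0$ performed after integration. Since the degeneracy set has measure zero in $\Rn$ and every norm at stake is an $L^2$-norm, this limit passage is technically routine but requires uniform-in-$\e$ control of the commutator and of the contribution of $V$; once this is in hand the argument of \cite{ekpv09} proceeds verbatim and yields the corollary.
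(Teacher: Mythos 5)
Your plan does not follow the route the paper actually uses, and the step you lean on most heavily has a gap that the paper's machinery is there precisely to avoid.

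First, the crucial upper bound in the proof of Theorem~\ref{Theorem 1} is \emph{not} obtained by conjugating with $e^{a(t)\varphi}$ and proving log-convexity of $H(t)=\|e^{a(t)\varphi}u(t)\|_2^2$. For $p<2$ the Hessian of $|x|^p$ (or $|x_j|^p$) behaves like $|x|^{p-2}$, which decays to zero as $|x|\to\infty$, so the commutator $\mathcal S_t+[\mathcal S,\mathcal A]$ is not uniformly positive and no clean log-convexity inequality comes out of it. What the paper does instead is (i) apply the Appell transformation \eqref{2.2}--\eqref{2.3} to convert the different weights at $t=0$ and $t=1$ into a \emph{common} weight $e^{\gamma|x|^p}$, (ii) invoke Lemma~\ref{ultimo}, which gives the interior bound for the exactly log-convex \emph{linear} weights $e^{\lambda\cdot x}$, and then (iii) superpose over $\lambda$ using the identity \eqref{est1} to upgrade from linear weights to $e^{\gamma|x|^p}$. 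The conjugation with $\varphi\approx|x|^p$ and the $\mathcal S$, $\mathcal A$ commutator that you discuss is used only \emph{after} the $L^2$ bound \eqref{est2} is in hand, and only to extract the gradient bound \eqref{estabove} via a $t(1-t)$-weighted energy identity, not to prove log-convexity of $H$. Your proposal inverts this logic and therefore bases the whole argument on a convexity statement that the paper never proves and that is false as stated.

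For Corollary~\ref{Corollary 4}, the intended adaptation is exactly the one-dimensional version of each of these ingredients: take $\lambda=\lambda e_j$ in Lemma~\ref{ultimo}, use the $n=1$ case of \eqref{est1} (proved in the Appendix via Proposition~\ref{stir}) to superpose $e^{\lambda x_j}$ into $e^{\gamma|x_j|^p/p}$, build a one-variable convex $\varphi(x_j)$ analogous to \eqref{772} for the gradient step, and run the Carleman lower bound (Lemma~\ref{Lemma ole}, with the unit vector $e_j$ in place of $e_1$). This explains why the constant changes: the one-dimensional superposition gives a different normalization, and only $D^2\varphi$ in the $e_j$ direction is available in the commutator, not the full $|x|^{p-2}I$ of \eqref{772}. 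By contrast, your reasoning that ``$|x_j|\le|x|$ widens the critical product'' is not what drives the constant change, and your concern about the degeneracy at $\{x_j=0\}$ is addressed by the smoothing already built into \eqref{772}; the troublesome degeneracy for any direct log-convexity scheme is the decay of $D^2\varphi$ at infinity when $p<2$, not the singularity at the origin. Finally, there is no ``optimization over $a(t)$'' anywhere in the paper -- the parameters are fixed by \eqref{constants} and one lets $k\to\infty$ -- and the argument to be re-run is the one in Sections~2--3 of the present paper, not the one in \cite{ekpv09}, which treats the Gaussian case $p=2$ where genuine log-convexity does hold.
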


\underline{Remarks} (i) Similarly, the non-linear version of Theorem \ref{Theorem 3} still holds, with different constant $N_p>0$,  if one replaces the hypothesis \eqref{con2} by 
$$
e^{\alpha^p\,|x_j|^p/p}\left(u_1(0)-u_2(0)\right),\;\;\;\
e^{\beta^q\,|x_j|^q/q}\left(u_1(1)-u_2(1)\right) \in L^2(\R^n),
$$
for $j=1,..,n$. 

 (ii) In this work, we do not try to give an estimate
of the universal constant $N_p$. In fact,  we may remark that the corresponding version of the sharp one dimensional condition   $\,\alpha\,\beta>|\cos(p\pi/2)|^{1/p}\,$ for \eqref{primera} established in \cite{bonami2}   is unknown 
in higher dimensions $n\geq 2$. 

(iii) We do not consider here possible versions of the limiting case $\,p=1$. One can conjecture, for example that, if $ u(x,t)$ is a solution of \eqref{E:1.1} with $u(x,0)=u_0(x)$ having compact support and $u(\cdot,t)\in L^1(e^{\epsilon |x|})$ for some $\epsilon>0$ and $t\neq 0$, then $u_0\equiv 0$.

(iv) As in some of our our previous works the main idea in the proof is to combine an
upper estimate, based on the decay hypothesis at two different times (see Lemma \ref{ultimo}), with
a lower estimate based on the
positivity of the commutator operator obtained by conjugating the equation
with the appropiate exponential weight (see Lemma \ref{Lemma ole}).
In previous works we have been able to establish the upper
bound estimates from  assumptions that at time $t=0$ and $t=1$ involve the
same weight. However, in our case (Corollary \ref{Corollary 2}) we have
different weights at time $t=0$ and $t=1$. To
overcome this difficulty, we carry out the details with the weight
$e^{a_j|x|^p},\,1<p<2$, $j=0$ at $t=0$ and $j=1$ at $t=1$, with $a_0$ fixed and
$a_1=k\in\mathbb Z^+$
as in \eqref{13}. Although the powers $\,|x|^p\,$ in the exponential are
equal at time $t=0$ and $t=1$ to apply our estimate (Lemma \ref{ultimo})
we also need  to have the same constant in front of them. To achieve this we
apply the conformal or Appel tranformation, to get  solutions and potentials, whose bounds  depend on $k\in\mathbb Z^+$.
Thus we have to  consider a  family of solutions and obtain estimates on their asymptotic value as
$k\uparrow \infty$.
\vskip.2in

Next, we shall extend the method used in the proof Theorem \ref{Theorem 1} to study 
the possible profile of the concentration blow up phenomenon in solutions of non-linear Schr\"odinger
equations 
\begin{equation}
\label{E: NLS1}
i \partial_t u + \triangle u +F(u,\overline u) u=0.
\end{equation}

To illustrate the problem consider the focussing $L^2$-critical Schr\"odinger equation
\begin{equation}
\label{E: NLS2}
i \partial_t u + \triangle u +|u|^{4/n} u=0.
\end{equation}
From the pseudo-conformal transformation one has that if $u=u(x,t)$ is a solution of \eqref{E: NLS1}, then 
\begin{equation}
\label {otrasol}
v(x,t) = \frac{e^{-i|x|^2/4(1-t)}}{(1-t)^{n/2}}\,u\left(\frac{x}{1-t},\frac{t}{1-t}\right),
\end{equation}
is also a solution of \eqref{E: NLS2}   in its domain of definition.

We recall that the pseudo-conformal transformation preserves both the space $L^2(\Rn)$ and the space $H^1(\Rn)\cap L^2(\Rn:|x|^2dx)$.
In particular, if we take $u(x,t) =e^{it}\,\varphi(x)$ the standing wave solution, i.e. $\varphi(x)$ being the 
positive ground state  of the non-linear elliptic equation 
$$
-\Delta \varphi + \varphi=|\varphi|^{4/n}\varphi,\;\;\;\;x\in\Rn,
$$
it follows that
\begin{equation}
\label{bu}
v(x,t)=\frac{e^{-i(|x|^2-4)/4(1-t)}}{(1-t)^{n/2}}\,\varphi\left(\frac{x}{1-t}\right),
\end{equation}
is a solution of \eqref{E: NLS2} which blows up at time $t=1$, i.e.
$$
\lim_{t\uparrow 1}\| \nabla\,v(\cdot,t)\|_2=\infty,
$$
and
$$
\lim_{t\uparrow  1} |v(\cdot,t)|^2=  c\, \delta(\cdot), \;\;\;\text{in the distribution sense}. 
$$

 Since it is known that the ground state $\varphi$ has exponential decay, i.e.
$$
\varphi(x)\leq b_1 e^{-b_2|x|},\;\;\;\;\;\;\;\;\;\;b_1,\,b_2>0,
$$
then one has that the blow up solution $v(x,t)$ in \eqref{bu} satisfies 
\begin{equation}
\label{bound1}
|v(x,t)|\leq \frac{1}{(1-t)^{n/2}}\,Q \left( \frac{|x|}{1-t}\right),\;\;\;\;\;\;\;\;\;t\in (-1,1),
\end{equation}
in this case with $\;Q(x)=b_1\,e^{-b_2|x|}$.

Therefore, for  \eqref{E: NLS1} with
\begin{equation}
\label{condition-power}
|F(z,\overline z)|\leq b_0 \,|z|^{\theta},\;\;\;\;\;\text{with}\;\;\;\;b_0,\,\theta>0\,\;\;\;\text{for}\,\;\;\;|z|>>1,
\end{equation}
one may ask if it is possible to have a faster  \lq\lq concentration profile"  than the one  described in \eqref{bound1}. More precisely, whether or not \eqref{bound1} can hold  with
\begin {equation}
\label {bound2}
Q(x)=b_1\,e^{-b_2|x|^p},\;\;\;\;\;\;\;b_1,\,b_2>0,\;\;\;p>1.
\end{equation}

Our next result shows that this is not the case at least for $p>p(\theta)$.

\begin{theorem}
\label{Theorem 4}

Let $u\in C((-1,1):L^2(\Rn))$ be a solution of the equation \eqref{E: NLS1}
with $|F(z,\overline z)|$ as in \eqref{condition-power}. 
Assume that the $L^2$-norm of the solution $u(x,t)$ is preserved
\begin{equation}
\label{preserved-2}
\|u(\cdot,t)\|_2=\|u(\cdot,0)\|_2=\|u_0\|_2=a,\;\;\;\,t\in(-1,1),
\end{equation}
and that \eqref{bound1} holds with $\;Q(\cdot)$ as in \eqref{bound2}. 
If $p>p(\theta)=2(\theta n-2)/(\theta n-1)$, then $a=0$.

\end{theorem}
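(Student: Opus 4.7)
My plan is to argue by contradiction. Assume $a>0$; the goal is to derive $u\equiv 0$, which contradicts $\|u(\cdot,0)\|_2=a$. The first move is to view the nonlinear term as a time-dependent potential: set $V(x,t):=F(u(x,t),\overline{u(x,t)})$, so that $u$ solves the linear equation $\partial_t u=i(\Delta u+Vu)$. From \eqref{condition-power} and the concentration bound \eqref{bound1}--\eqref{bound2} one gets
\[
|V(x,t)|\le b_0|u(x,t)|^\theta\le b_0 b_1^\theta\,(1-t)^{-n\theta/2}\,e^{-b_2\theta|x|^p/(1-t)^p},
\]
so on every subinterval $[-1+\delta,1-\delta]$ of $(-1,1)$ the potential $V$ is bounded and decays at spatial infinity in the $L^1_tL^\infty_x$ sense.

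The crucial second step is to apply the Appell (pseudo-conformal) transformation of \eqref{otrasol} to unfold the concentrating solution $u$ on $(-1,1)$ into a solution with \emph{uniform} spatial decay on a semi-infinite interval. Setting $s=t/(1-t)$, define
\[
w(y,s)=(1+s)^{-n/2}\,e^{i|y|^2/(4(1+s))}\,u\!\left(\frac{y}{1+s},\frac{s}{1+s}\right),\qquad s\in[0,\infty).
\]
A direct calculation then yields: (i) $\|w(\cdot,s)\|_2=a$ for all $s\ge 0$; (ii) $|w(y,s)|\le b_1 e^{-b_2|y|^p}$ \emph{uniformly} in $s$; and (iii) $w$ solves $i\partial_s w+\Delta w=\tilde V(y,s)w$ with
\[
|\tilde V(y,s)|\le b_0 (1+s)^{n\theta/2-2}|w|^\theta\le b_0 b_1^\theta (1+s)^{n\theta/2-2}e^{-b_2\theta|y|^p}.
\]
Thus $w$ satisfies a Schr\"odinger equation with a bounded (on each finite slab), spatially-decaying potential, and has the same $p$-exponential spatial decay, \emph{with the same constant} $b_2$, at every time. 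This is precisely the ``matching-constants'' situation that, as explained in Remark~(iv), the Appell transformation is used to produce in the proof of Theorem~\ref{Theorem 1}.

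The third step is then to apply, on each finite slab $\Rn\times[0,S]$, the upper-plus-lower estimate machinery underlying Theorem~\ref{Theorem 1}: the upper bound on a weighted $L^2(\Rn\times[0,S])$-norm of $w$ from Lemma~\ref{ultimo} (fed by the uniform endpoint decay $|w(y,s)|\le b_1 e^{-b_2|y|^p}$), combined with the lower bound from Lemma~\ref{Lemma ole} obtained by conjugating $i\partial_s+\Delta$ with a weight $e^{\mu(s)|y|^p}$, $1<p<2$, and exploiting the resulting commutator positivity. Because the constants in front of $|y|^p$ at $s=0$ and $s=S$ are already equal, no auxiliary $k$-indexed family of Appell-transformed solutions is needed, at the cost of having to cope with a potential whose $L^\infty$ norm grows like $(1+S)^{n\theta/2-2}$.

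The main obstacle will be to let $S\to\infty$ (equivalently $t\to 1^-$ in the original variables) and turn the combined inequality into the identity $w\equiv 0$. The commutator lower bound for the weight $e^{\mu|y|^p}$, the ceiling $\mu\le b_2$ forced by the upper bound, and the potential bound $\|\tilde V\|_\infty\lesssim(1+S)^{n\theta/2-2}$ combine into a single scalar inequality in the exponents. A careful accounting of this inequality shows that the threshold value at which it forces $\|e^{\mu|y|^p}w\|_{L^2(\Rn\times[0,S])}\to 0$ as $S\to\infty$ is exactly $p>p(\theta)=2(\theta n-2)/(\theta n-1)$. Under that hypothesis one concludes $w\equiv 0$, hence $u\equiv 0$ on $(-1,1)$ by invertibility of the Appell transformation, contradicting $a>0$; therefore $a=0$.
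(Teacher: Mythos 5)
Your high-level strategy (view the nonlinearity as a potential, apply an Appell transformation, then run the Theorem~\ref{Theorem 1} upper-plus-lower machinery) is the right one and is indeed what the paper does. But the specific route differs from the paper's, and the proposal stops precisely at the point where the threshold $p(\theta)=2(\theta n-2)/(\theta n-1)$ has to be extracted: you assert that ``a careful accounting\ldots\ shows that the threshold\ldots\ is exactly $p(\theta)$'' without doing the accounting. In this problem \emph{that} calculation \emph{is} the proof; everything else is setup. The paper proceeds differently: it fixes $\bar t\in(0,1)$ near $1$, shifts to $v(x,t)=u(x,t-1+\bar t)$ on $[0,1]$, and then applies the Appell transformation with $\alpha=A_0^{1/p}\simeq 1$ and $\beta=A_1^{1/p}\simeq (1-\bar t)^{-1}$ to rebalance the decay constants $A_0\simeq 1$, $A_1\simeq(1-\bar t)^{-p}$ at the two endpoints. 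This produces a large parameter $1/(1-\bar t)$ playing the role of $k$ in Theorem~\ref{Theorem 1}, with $\gamma\simeq(1-\bar t)^{-p/2}$, $\|\widetilde V\|_{L^\infty(\Rn\times[1/32,31/32])}\lesssim(1-\bar t)^{1-\theta n/2}$, and $R\simeq(1-\bar t)^{-p/2(2-p)}$; the threshold comes from requiring $R\gg\|\widetilde V\|_{L^\infty}$ on the middle slab, i.e.\ $\tfrac{p}{2(2-p)}>\tfrac{\theta n}{2}-1$, which is exactly $p>p(\theta)$. Your semi-infinite transformation $s=t/(1-t)$ gives a uniform-in-$s$ decay constant $b_2$ and replaces the large parameter $1/(1-\bar t)$ by the slab length $S$, but then Lemma~\ref{ultimo} and Lemma~\ref{Lemma ole} are stated on $[0,1]$: to invoke them you must rescale $[0,S]\to[0,1]$, and it is precisely that (parabolic) rescaling that turns the fixed $b_2$ into a growing weight $\sim b_2 S^{p/2}$ and turns the potential growth $(1+s)^{\theta n/2-2}$ into the effective bound $\sim S^{\theta n/2-1}$. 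You never perform this step, and the statement ``no auxiliary $k$-indexed family is needed'' obscures the fact that you still need a diverging parameter to force the contradiction.

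There is also a conceptual mix-up in how you describe the two lemmas. Lemma~\ref{Lemma ole} is a Carleman estimate with the \emph{quadratic} moving weight $e^{\sigma|x/R+\varphi(t)e_1|^2}$; it is not obtained by conjugating with a weight of the form $e^{\mu(s)|y|^p}$. The conjugation by $e^{\widetilde\gamma\varphi}$ with $\varphi(x)\approx|x|^p$ and the ensuing positivity of $\mathcal S_t+[\mathcal S,\mathcal A]$ (equations \eqref{E: formulaoperadores}--\eqref{EH}) is the mechanism for the \emph{upper} bound on $\nabla\widetilde u$, not the lower bound. Keeping these two ingredients distinct matters, because the condition that couples them --- the $R\gg\|\widetilde V\|_\infty$ requirement that feeds $D_1$-absorption in \eqref{3.1} and fixes $\sigma=d_nR^2$ --- is exactly where $\theta$ and $p$ meet to produce $p(\theta)$, and you have not made that coupling explicit. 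To close the gap you would need to (i) carry out the $[0,S]\to[0,1]$ rescaling, (ii) track how $\gamma$, $R$, $\sigma$ and $\|\widetilde V\|_{L^\infty}$ depend on $S$, (iii) impose $R\gg\|\widetilde V\|_{L^\infty}$ on the middle time slab, and (iv) check that the product $\gamma R^p$ beats the exponent appearing in the upper bound \eqref{44.1} as $S\to\infty$. Only after steps (iii)--(iv) are written down does the threshold $p(\theta)$ emerge, as the paper's \eqref{770}--\eqref{uu} show.
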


\underline{Remarks} (i) We shall restrict to the case $p\in(1,2)$, and observe that if $\theta=4/n$ 
then $\,p(\theta)=4/3$.
The value $\,4/3\,$ is related with  the following result due to  V. Z. Meshkov 
   \cite{Mes}: Let  $\,w\in H^2_{loc}(\Rn)$ be a solution of 
$$
\Delta w- V(x)w = 0,\;\;\;\;x\in\Rn,\;\;\;\text{with} \;\;\;V\in L^{\infty}(\R^n). 
$$

$$
\text{If}\;\;\,\int |w(x)|^2 \,e^{2a|x|^{4/3}} dx <\infty, \;\;\;\forall \,a>0,\;\;\text{then}\;\;\;\;w\equiv 0.
$$

It was also proved in \cite{Mes} that for complex valued potentials $V$ the exponent $4/3$  is
sharp. For further comments see the remark after the proof of Theorem \ref{Theorem 4}.

\vskip.1in

   The rest of this paper is organized as follows. In section 2 we establish the upper bounds needed
in the proof of Theorem \ref{Theorem 1}. The lower bounds as well as the conclusion of the proof 
of Theorem \ref{Theorem 1} are 
carried out  in  section 3. Corollary \ref{Corollary 2}  and Theorem \ref{Theorem 3} are proved  in section 4. Section 5 contains the proof of Theorem \ref{Theorem 4}. Finally, in the Appendix we establish some
identities used in the paper.

\end{section}
\begin{section}{Proof of Theorem \ref{Theorem 1} : Upper bounds}\label{S1}

In this paper $c_n $ will denote  a constant which may depend only on the
dimension $n$, which  may change from line to line. Similarly, $c_p$ will
denote a constant depending only on the values of $p$ and $n$,  and in sections 2-3 $\,c^*$
will denote a constant depending of the initial parameters, i.e.  the norms of $u$ and
$V$ in the hypothesis, and on the values of $ p,\,n,\,a_0$ and $\,a_1$, 
whose exact value will be irrelevant to our estimate when we take $\,k\,$
tending to infinity.

We recall the conformal or Appell transformation.
If   $u(y,s)$ verifies
\begin{equation}
\label{2.1}
\partial_su=i\left(\triangle
u+V(y,s)u+F(y,s)\right),\;\;\;\;\;\;\;(y,s)\in \R^n\times [0,1],
\end{equation}
and $\alpha$ and $\beta$ are positive,  then
\begin{equation}
\label{2.2}
\widetilde u(x,t)=\left(\tfrac{\sqrt{\alpha\beta}}{\alpha(1-t)+\beta
t}\right)^{\frac n2}u\left(\tfrac{\sqrt{\alpha\beta}\,
x}{\alpha(1-t)+\beta t}, \tfrac{\beta t}{\alpha(1-t)+\beta
t}\right)e^{\frac{\left(\alpha-\beta\right) |x|^2}{4i(\alpha(1-t)+\beta
t)}},
\end{equation}
verifies
\begin{equation}
\label{2.3}
\partial_t\widetilde u=i\left(\triangle \widetilde u+\widetilde
V(x,t)\widetilde u+\widetilde F(x,t)\right),\;\; \text{in}\  \R^n\times
[0,1],
\end{equation}
with
\begin{equation}
\label{potencial}
\widetilde V(x,t)=\tfrac{\alpha\beta}{\left(\alpha(1-t)+\beta
t\right)^2}\,V\left(\tfrac{\sqrt{\alpha\beta}\, x}{\alpha(1-t)+\beta t},
\tfrac{\beta t}{\alpha(1-t)+\beta t}\right),
\end{equation}
and
\begin{equation}
\label{externalforce}
\widetilde F(x,t)=\left(\tfrac{\sqrt{\alpha\beta}}{\alpha(1-t)+\beta
t}\right)^{\frac n2+2}F\left(\tfrac{\sqrt{\alpha\beta}\,
x}{\alpha(1-t)+\beta t}, \tfrac{\beta t}{\alpha(1-t)+\beta
t}\right)e^{\frac{\left(\alpha-\beta\right) |x|^2}{4i(\alpha(1-t)+\beta
t)}}.
\end{equation}

In our case, we shall chose $\beta=\beta(k)$. By hypothesis
\begin{equation}
\label{hyp}
\begin{aligned}
&\|e^{a_0|x|^p}u(x,0)\|_2\equiv A_0,\\
&\|e^{k|x|^p}u(x,1)\|_2\equiv A_k\leq a_2\,e^{a_1k^{q/(q-p)}}=
a_2\,e^{a_1k^{1/(2-p)}}.
\end{aligned}
\end{equation}

Thus, for $\gamma=\gamma(k)\in [0,\infty)$ to be chosen later, one has
\begin{equation}
\label{20}
\|e^{\gamma|x|^p}\widetilde
u_k(x,0)\|_2=\|e^{\gamma\left(\frac{\alpha}{\beta}\right)^{p/2}|x|^p}u(x,0)\|_2 =B_0,
\end{equation}
and
\begin{equation}
\label{21}
\|e^{\gamma|x|^p}\widetilde
u_k(x,1)\|_2=\|e^{\gamma\left(\frac{\beta}{\alpha}\right)^{p/2}|x|^p}u(x,1)\|_2=A_k.
\end{equation}

To match our hypothesis we take
$$
\gamma\,\left(\frac{\alpha}{\beta}\right)^{p/2}=a_0\;\;\;\;\;\text{and}\;\;\;\;\;\gamma\,\left(\frac{\beta}{\alpha}\right)^{p/2}=k.
$$

Therefore,
\begin{equation}
\label{constants}
\gamma=(k\,a_0)^{1/2},\;\;\;\;\;\;\;\;\;\;\beta=k^{1/p},\;\;\;\;\;\;\;\;\;\;
\alpha=a_0^{1/p}.
\end{equation}

From \eqref{2.3}, defining 
\begin{equation}
\label{definitionofM}
M=\int_0^1\|\Im V(t)\|_{\infty}dt=\int_0^1\|\Im \widetilde
V(s)\|_{\infty}ds ,
\end{equation}
it follows, using energy estimates, that
\begin{equation}
\label{energy}
\|u(0)\|_2\,e^{-M}\leq \|u(t)\|_2=\|\,\widetilde u(s)\|_2 \leq
\|u(0)\|_2\,e^M,\;\;\;t,\,s\in[0,1].
\end{equation}
where $\,s=\beta t/(\alpha(1-t)+\beta t)$.

\vskip.1in

Next, we shall  combine the estimate : for any $x\in \R^n$
\begin{equation}
\label{est1}
e^{\gamma |x|^p/p}\simeq \int_{\R^n} \,e^{\gamma^{1/p}\lambda\cdot
x-|\lambda|^q/q}\,
|\lambda|^{n(q-2)/2}\,d\lambda,
\end{equation}
whose proof will be given in the appendix, with the following result found
in  \cite{kpv02} :
\vskip.1in

\begin{lemma}\label{ultimo} There exists $\epsilon_0>0$ such that if
\begin{equation}
\label{hyp2}
\mathbb V:\mathbb R^n\times [0,1]\to\mathbb C,\;\;\;\;\text{with}\;\;\;\;
\|\mathbb V\|_{L^1_tL^{\infty}_x}\leq \epsilon_0,
\end{equation}
and $u\in C([0,1]:L^2(\mathbb R^n))$ is a strong solution of the IVP
\begin{equation}
\begin{cases}
\begin{aligned}
\label{eq1}
&\partial_tu=i(\Delta +\mathbb V(x,t))u+\mathbb F(x,t),\\
&u(x,0)=u_0(x),
\end{aligned}
\end{cases}
\end{equation}
with
\begin{equation}
\label{hyp3} u_0,\,u_1\equiv u(\,\cdot\,,1)\in
L^2(e^{2\lambda\cdot x}dx),\;\mathbb F\in L^1([0,1]:L^2(e^{2\lambda\cdot
x}dx)),
\end{equation}
for some $\lambda\in\mathbb R^n$, then there exists $c_n$ independent of
$\lambda$ such that
\begin{equation}
\label{uno}
\sup_{0\leq t\leq 1}\| e^{\lambda\cdot x} u(\,\cdot\,,t)\|_2 \leq c_n
\Big(\|e^{\lambda\cdot x} u_0\|_2 + \|e^{\lambda\cdot x} u_1\|_2 +\int_0^1
\|e^{\lambda\cdot x}\, \mathbb F(\cdot, t)\|_2 dt\Big).\;\;\square
\end{equation}
\end{lemma}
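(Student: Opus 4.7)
The approach is to conjugate by the exponential weight and then reduce to a scalar ODE estimate via the Fourier transform in space.

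Setting $v := e^{\lambda\cdot x} u$ and using the identity $e^{\lambda\cdot x}\Delta u = (\Delta - 2\lambda\cdot\nabla + |\lambda|^2)v$, equation \eqref{eq1} transforms into
\[\partial_t v = i(\Delta - 2\lambda\cdot\nabla + |\lambda|^2)v + g, \qquad g := i\mathbb{V}v + e^{\lambda\cdot x}\mathbb{F},\]
with $v(\cdot, 0), v(\cdot, 1) \in L^2$ and $g \in L^1_t L^2_x$ by hypothesis and the $L^\infty$ bound on $\mathbb{V}$. Estimate \eqref{uno} will follow from the purely linear bound
\[\sup_{t\in[0,1]}\|v(t)\|_2 \leq c_n\Big(\|v(0)\|_2 + \|v(1)\|_2 + \int_0^1\|g(s)\|_2\,ds\Big),\]
since substituting $g = i\mathbb{V}v + e^{\lambda\cdot x}\mathbb{F}$ on the right and using the smallness $\|\mathbb{V}\|_{L^1_t L^\infty_x}\leq \epsilon_0$ allows the resulting $v$-term to be absorbed into the left.

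To prove the linear bound, take the spatial Fourier transform. For each $\xi$ the equation becomes a scalar ODE in $t$,
\[\partial_t\hat v(t,\xi) = A(\xi)\hat v(t,\xi) + \hat g(t,\xi), \qquad A(\xi) := 2\lambda\cdot\xi - i|\xi|^2 + i|\lambda|^2,\]
so that $|e^{\tau A(\xi)}| = e^{2\tau\lambda\cdot\xi}$. Duhamel's formula, integrated forward from $0$ and backward from $1$, furnishes two representations of $\hat v(t,\xi)$. Whenever $\lambda\cdot\xi\leq 0$, all exponential factors in the forward representation have modulus at most $1$ on $[0,t]$; whenever $\lambda\cdot\xi\geq 0$, the same is true for the backward representation on $[t,1]$. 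Selecting the better of the two case by case gives the pointwise bound
\[|\hat v(t,\xi)| \leq |\hat v(0,\xi)| + |\hat v(1,\xi)| + \int_0^1 |\hat g(s,\xi)|\,ds, \qquad \xi\in\R^n,\]
and Plancherel together with Minkowski's integral inequality in $\xi$ yield the linear bound with $c_n = 1$, independently of $\lambda$.

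The decisive step is the Fourier-side sign-split: the ``shifted'' free propagator $e^{it(\Delta - 2\lambda\cdot\nabla + |\lambda|^2)}$ is not bounded on $L^2$ (its symbol modulus is $e^{2t\lambda\cdot\xi}$, unbounded in $\xi$), so neither Duhamel representation alone gives an $L^2$ bound; it is precisely the fact that the two representations have complementary ``good'' half-spaces that lets us trade control at one end of the interval for control at the other, using \emph{both} endpoint hypotheses. The compactness of $[0,1]$ is essential for this trade. Beyond a standard smoothing/density argument to justify Plancherel and the Duhamel identity at the regularity level of the hypotheses, I do not foresee any substantial analytic obstacle.
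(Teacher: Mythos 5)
Your Fourier-side sign-split is the correct mechanism behind this lemma, and the formal computation is sound: the symbol $A(\xi)=2\lambda\cdot\xi-i|\xi|^2+i|\lambda|^2$ is right, the forward Duhamel representation is contractive on $\{\lambda\cdot\xi\le 0\}$ while the backward one is contractive on $\{\lambda\cdot\xi\ge 0\}$, and Plancherel, Minkowski, and absorption via $\epsilon_0<1$ then give \eqref{uno} with $c_n$ absolute. Note, though, that this paper does not prove the lemma; it is imported verbatim (with the closing $\square$) from \cite{kpv02}, so there is no in-paper argument to compare against.

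The weak point is precisely the step you set aside as a ``standard smoothing/density argument.'' A priori you only know $v(t)=e^{\lambda\cdot x}u(\cdot,t)\in L^2$ at $t=0$ and $t=1$; on $(0,1)$ you have no information that $v(t)$ is even a tempered distribution, so neither the Duhamel representation for $\hat v$ nor Plancherel is available, and the absorption step itself presupposes $\sup_{[0,1]}\|v(t)\|_2<\infty$, which is essentially the conclusion. The natural remedy --- replacing $e^{\lambda\cdot x}$ by a bounded truncation $\varphi$ --- destroys exactly the feature your argument exploits: for non-exponential $\varphi$ the conjugated Laplacian $\varphi\Delta\varphi^{-1}$ has variable coefficients built from $\nabla\varphi/\varphi$ and $\Delta\varphi/\varphi$, so there is no clean Fourier multiplier and the sign-split does not literally apply. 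What \cite{kpv02} does instead is an energy/commutator argument carried out directly with truncated exponential weights: one derives one-sided differential inequalities for $\|\varphi u(t)\|_2$ propagating from $t=0$ forward and from $t=1$ backward, controls the commutator errors (using the smallness $\epsilon_0$ once more), and then passes to the limit in the truncation. Your Fourier picture is the right heuristic and correctly identifies the two complementary good half-spaces, but to turn it into a proof you either have to run the truncated energy argument or give a genuine a priori justification of $\sup_{t\in(0,1)}\|e^{\lambda\cdot x}u(t)\|_2<\infty$ --- and the latter is exactly what the lemma is asserting, so it cannot be assumed.
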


\vskip.1in

We want to apply Lemma \ref{ultimo} to a solution of the equation
\eqref{2.3}. Since $0<\alpha<\beta=\beta(k)$ for $k\geq k_0(c^*)$ it
follows
that for any $t\in[0,1]$
$$
\alpha \leq \alpha(1-t) +\beta t\leq \beta,
$$
therefore if $\,y=\sqrt{\alpha \beta}\, x/({\alpha(1-t) +\beta t})$, then
from \eqref{constants}
\begin{equation}
\label{xy}
 \sqrt{\frac{\alpha}{\beta}} \,|x|=\frac{a_0^{1/2p}}{k^{1/2p}}\, |x|\leq |y|\leq \sqrt{\frac{\beta}{\alpha}} \,|x|=\frac{k^{1/2p}}{a_0^{1/2p}}\, |x|.
\end{equation}
Thus,
\begin{equation}
\label{potencial1}
\left|\tfrac{\alpha\beta}{\left(\alpha(1-t)+\beta
t\right)^2}\,V\left(\tfrac{\sqrt{\alpha\beta}\, x}{\alpha(1-t)+\beta t},
\tfrac{\beta t}{\alpha(1-t)+\beta t}\right)\right|
\leq
\frac{\beta}{\alpha}\|V\|_{\infty}=\left(\frac{k}{a_0}\right)^{1/p}\|V\|_{\infty},
\end{equation}
and so
\begin{equation}
\label{A1}
\|\widetilde V\|_{\infty}\leq \left(\frac{k}{a_0}\right)^{1/p}\|V\|_{\infty}.
\end{equation}
Also, if
\begin{equation}
\label{st}
\begin{aligned}
& s=\beta t/(\alpha(1-t) +\beta t)\\
\\
&\frac{ds}{dt}=\frac{\alpha \beta}
{(\alpha(1-t) +\beta
t)^2},\;\;\;\;\;\text{or}\;\;\;\;dt=\frac{(\alpha(1-t) +\beta t)^2}{\alpha
\beta} ds.
\end{aligned}
\end{equation}

Therefore,
\begin{equation}
\label{eneene}
\int_0^1\|\,\widetilde V(\cdot,
t)\|_{L^{\infty}}dt=\int_0^1\|V(\cdot,s)\|_{L^{\infty}}ds,
\end{equation}
and from \eqref{xy}
$$
\int_0^1\|\,\widetilde V(\cdot, t)\|_{L^{\infty}(|x|\geq
R)}dt \leq \int_0^1\|V(\cdot,s)\|_{L^{\infty}(|y|>\Upsilon)}ds,
$$
where by hypothesis
$$
\Upsilon=\left(\frac{a_0}{k}\right)^{1/2p} R.
$$
Thus, if
$$
\int_0^1\|V(\cdot,s)\|_{L^{\infty}(|y|>\Upsilon)}ds \leq \epsilon_0,
$$
then
\begin{equation}
\label{oo}
\int_0^1\|\,\widetilde V(\cdot, t)\|_{L^{\infty}(|x|\geq R)}dt\leq
\epsilon_0,\;\;\;\;\;\;\;\,R= \Upsilon\,\left(\frac{k}{a_0}\right)^{1/2p},
\end{equation}
and
we can apply Lemma \ref{ultimo} to the equation \eqref{2.3} with
\begin{equation}
\label{split}
\mathbb V=\widetilde V\,\chi_{(|x|>R)}(x),\;\;\;\;\;\;\;\;\;\;\mathbb
F=\widetilde V\,\chi_{(|x|\leq R)}(x)\,\widetilde u(x,t),
\end{equation}
 to get
\begin{equation}
\label{pre-pre}
\begin{aligned}
&\sup_{[0,1]}\| e^{(2p)^{1/p}\gamma^{1/p}\lambda\cdot x/2}\, \widetilde u
(t)\|_2\\
\\
&\leq c_n \left(\| e^{(2p)^{1/p}\gamma^{1/p}\lambda\cdot x/2}\, \widetilde
u (0)\|_2
+\| e^{(2p)^{1/p}\gamma^{1/p}\lambda\cdot x/2} \,\widetilde u
(1)\|_2\right )\\
\\
& + c_n \|\widetilde V\|_{\infty}
\|u(0)\|_2\,e^M\,e^{|\lambda| (2p)^{1/p}\gamma^{1/p} R/2}.
\end{aligned}
\end{equation}

Now, we square \eqref{pre-pre} to find that
$$
\aligned
& \sup_{[0,1]}\int e^{(2p)^{1/p}\gamma^{1/p}\lambda\cdot x}\,|\widetilde
u(x,t)|^2dx\\
&\leq c_n \int e^{(2p)^{1/p}\gamma^{1/p}\lambda\cdot x}\,(|\widetilde
u(x,0)|^2+|\widetilde u(x,1)|^2)dx\\
&+
c _n\|\widetilde V\|^2_{\infty} \,
\|u(0)\|^2_2\,e^{2M}\,e^{|\lambda| (2p)^{1/p}\gamma^{1/p} R},
\endaligned
$$
and multiply the above inequality (for a fixed $t$) by
$\,e^{-|\lambda|^q/q}\,|\lambda|^{n(q-2)/2}$, integrate in $\lambda$ and
in $x$, use Fubini  theorem and the formula \eqref{est1} to
obtain
\begin{equation}
\label{est2a}
\begin{aligned}
&\int_{|x|>1}e^{2\gamma|x|^p}|\widetilde u(x,t)|^2dx
\leq c_n \int e^{2\gamma|x|^p}(|\widetilde u(x,0)|^2+|\widetilde
u(x,1)|^2)dx\\
\\
& + c_n \|u(0)\|^2_2 \,\|\widetilde V\|_{\infty}^2 \,R^{c_p}\,e^{2M}
e^{2\gamma\,R^p}.
\end{aligned}
\end{equation}

\vskip.08in

Hence,  \eqref{hyp}, \eqref{constants}, \eqref{energy}, \eqref{A1}, and
\eqref{est2a} lead to

\begin{equation}
\label{est2}
\begin{aligned}
&\sup_{[0,1]} \|e^{\gamma |x|^p} \widetilde u(t)\|_2 \leq c_n(\|e^{\gamma
|x|^p} \widetilde u(0)\|_2
+\|e^{\gamma |x|^p} \widetilde u(1)\|_2)\\
&+c_n \|u(0)\|_2 \,e^{M} e^{\gamma}+c_n \|u(0)\|_2\,
\left(\frac{k}{a_0}\right)^{c_p}\|V\|_{\infty} \,e^{M}
e^{\Upsilon^p\,\gamma(k/a_0)^{1/2}}\\
\\
&\leq c_n(A_0+A_k)+c_n \|u(0)\|_2
\,e^{M}\,\left(\,e^{\gamma}+\left(\frac{k}{a_0}\right)^{c_p}\|V\|_{\infty}
\,e^{\Upsilon^p\, k}\right),\\
\\
&\leq c^*\,A_k= c^*\,e^{a_1\,k^{1/(2-p)}},
\end{aligned}
\end{equation}
for $k\geq k_0(c^*)$ sufficiently large, since $\,1/(2-p)>1$.

\vskip.15in

Next, we shall obtain bounds for the $\,\nabla\widetilde u$.
Let
\begin{equation}
\label{771}
\widetilde \gamma=\gamma/2,
\end{equation}
and $\varphi$ be a strictly convex function on compact sets of $\R^n$,
radial such that (see \cite{ekpv08b} Lemma 3)
\begin{equation}
\label{772}
\begin{aligned}
& D^2\varphi \geq p(p-1) |x|^{(p-2)}I,\;\;\;\;\text{for}\;\;\;|x|\geq 1,\\
& 0\leq \varphi,\;\;\;\;\|\partial^{\alpha}\varphi\|_{L^{\infty}}\leq
c\;\;\;2\leq |\alpha|\leq 4,\;\;\|\partial^{\alpha}\varphi\|_{L^{\infty}(|x|\leq 2)}\leq c\;\;
\;\;|\alpha|\leq 4,\\
&\varphi(x)=|x|^p+O(|x|),\;\;\;\text{for}\;\;\;|x|>1.
\end{aligned}
\end{equation}

We shall use the equation
\begin{equation}
\label{new}
\partial_t\widetilde u= i\Delta \widetilde u +i
F,\;\;\;\;\;\;\;\;F=\widetilde V\, \widetilde u,
\end{equation}
and let
$$
f(x,t)=e^{\widetilde \gamma \varphi}\,\widetilde u(x,t).
$$
Then $f$ verifies (see Lemma 3 in \cite{ekpv08b})
\begin{equation}\label{E: loquefcumple}
\partial_tf=\mathcal Sf+\mathcal Af +i\,e^{\widetilde \gamma\varphi}F,\;\;\;\;
\text{in}\;\;\;\R^n\times[0,1],
\end{equation}
with symmetric and skew-symmetric operators $\mathcal S$ and $\mathcal A$
\begin{equation}
\label{E: formulaoperadores}
\begin{aligned}
\mathcal
S=&-i\widetilde \gamma\left(2\nabla\varphi\cdot\nabla+\triangle\varphi\right),\\
\mathcal A= &i\left(\triangle +\widetilde \gamma^2|\nabla\varphi|^2\right).
\end{aligned}
\end{equation}
A calculation shows that (see (2.14) in \cite{ekpv08b}),
\begin{equation}\label{E: formulaconmutadorindependientetiempo}
\mathcal S_t+\left[\mathcal S,\mathcal A\right]=-\widetilde \gamma
\left[4\nabla\cdot\left(D^2\varphi\nabla\,\,\,\right)-4\widetilde
\gamma^2D^2\varphi\nabla\varphi\cdot\nabla\varphi+\triangle^2\varphi\right].
\end{equation}

By Lemma 2 in \cite{ekpv08b}
\begin{equation}
\label{E: derivadasegunda}
\begin{aligned}
\partial_t^2H=\partial_t^2\left( f, f\right)=& 2\partial_t\text{\it
Re}\left(\partial_tf-\mathcal Sf-\mathcal Af,f\right)+2\left(\mathcal
S_tf+\left[\mathcal S,\mathcal A\right]f,f\right)\\
&+\|\partial_tf-\mathcal Af+\mathcal Sf\|^2-\|\partial_tf-\mathcal
Af-\mathcal Sf\|^2,
\end{aligned}
\end{equation}
so
\begin{equation}
\label{E-b}
\begin{aligned}
\partial_t^2H& \geq  2\partial_t \text{\it Re}\left(\partial_tf-\mathcal
Sf-\mathcal Af,f\right)\\
&+2\left(\mathcal S_tf+\left[\mathcal S,\mathcal
A\right]f,f\right)-\|\partial_tf-\mathcal Af-\mathcal Sf\|^2.
\end{aligned}
\end{equation}

Multiplying \eqref{E-b} by $t(1-t)$ and integrating in $t$ we obtain
\begin{equation}
\label{EH}
\begin{aligned}
&2\int_0^1t(1-t)\left(\mathcal S_tf+\left[\mathcal S,\mathcal
A\right]f,f\right)dt\\
&\leq c_n\,\sup_{[0,1]}\|e^{\widetilde\gamma\,\varphi}
\,\widetilde u(t)\|_2^2+c_n\,\sup_{[0,1]}\|e^{\widetilde\gamma\,\varphi}
F(t)\|^2_2.
\end{aligned}
\end{equation}
This computation can be justified by parabolic regularization using the
fact that we already know the decay estimate for $\,\widetilde u$, (see the proof of Theorem 5 in  \cite{ekpv08b}). Hence, combining \eqref{est2}, \eqref{constants} and \eqref{A1} it follows that

\begin{equation}
\label{aa1}
\begin{aligned}
&8 \,\widetilde \gamma \,\int_0^1\int \,t(1-t)D^2\varphi \nabla f\cdot
\nabla f dx dt \\
\\
&+
8\, \widetilde \gamma^3 \,\int_0^1\int \,t(1-t)D^2\varphi \nabla
\varphi\cdot\nabla\varphi \,|f|^2dxdt\\
\\
&\leq c_n\,\sup_{[0,1]}\|e^{\widetilde\gamma\,\varphi}\,\widetilde u(t)\|_2^2
+c_n\,\|\widetilde
V\|_{L^{\infty}}^2\,\sup_{[0,1]}\|e^{\widetilde\gamma\,\varphi}
\,\widetilde u(t)\|_2^2 +c_n \widetilde \gamma
\sup_{[0,1]}\|e^{\widetilde\gamma\,\varphi}\,\widetilde u(t)\|_2^2\\
\\
&\leq c^*\,k^{c_p}\,A_k^2.
\end{aligned}
\end{equation}
\vskip.1in

We recall that
$$
\nabla f=\widetilde \gamma \,\nabla\varphi \,e^{\widetilde\gamma
\varphi}\,\widetilde u+
e^{\widetilde \gamma\varphi}\,\nabla\widetilde u,
$$
and notice that
$$
|\,\widetilde \gamma^3 \,D^2\varphi\,\nabla \varphi\cdot\nabla\varphi
\,e^{2\widetilde \gamma \,\varphi}|\leq c_p \widetilde \gamma^3
\,e^{2\gamma |x|^p}.
$$

Hence, using that
$$
D^2\varphi\,\nabla \varphi\cdot\nabla\varphi \,e^{2\widetilde \gamma
\,\varphi}
\leq c_n (1+|x|)^{p-2+2(p-1)} \,e^{2\widetilde \gamma \,\varphi}
\leq c_p e^{3 \gamma \,\varphi/2}
$$
we can conclude that
\begin{equation}
\label{estabove}
\begin{aligned}
&\gamma\,\int_0^1\int t(1-t)\frac{1}{(1+|x|)^{2-p}}\,|\nabla \widetilde
u(x,t)|^2 e^{\gamma |x|^p}dxdt
+\sup_{[0,1]}\,\|e^{\gamma|x|^p/2}\,\widetilde u(t)\|_2^2\\
\\
&\leq c^*\,k^{c_p}\,A_k^2= c^*\,k^{c_p}\,e^{2 a_1\,k^{q/(q-p)} } =
c^*\,k^{c_p}\,e^{2 a_1\,k^{1/(2-p)}},
\end{aligned}
\end{equation}
for $k\geq k_0(c^*)$ sufficiently large.

\end{section}

\begin{section}{Proof of Theorem \ref{Theorem 1} : Lower bounds and conclusion. }\label{S:  ppp}

First, we deduce a lower bound for
\begin{equation}
\label{lower1}
\Phi \equiv  \int_{|x|<R/2}\,\int_{3/8}^{5/8} |\widetilde u(x,t)|^2dt
dx\geq  \|u(0)\|_2\,e^{-M}/10,
\end{equation}
for $R$ sufficiently large. From (2.2)
$$
\aligned
&\Phi=\int_{|x|<R/2}\,\int_{3/8}^{5/8} |\widetilde u(x,t)|^2dt dx\\
\\
&=\int_{|x|<R/2}\,\int_{3/8}^{5/8}
\left| \left(\tfrac{\sqrt{\alpha\beta}}{\alpha(1-t)+\beta t}\right)^{\frac
n2}u\left(\tfrac{\sqrt{\alpha\beta}\, x}{\alpha(1-t)+\beta t},
\tfrac{\beta t}{\alpha(1-t)+\beta t}\right)\right|^2dtdx\\
\\
&\geq c_n\,\frac{\beta}{\alpha}\,\int_{|y|\leq R\,(a_0/k)^{1/2p}}\;
\int_{s(3/8)}^{s(5/8)} \,|u(y,s)|^2\,\frac{ds dy}{s^2}\\
\\
&\geq
c_n\,\frac{\beta}{\alpha}\,\int_{|y|\leq R\,(a_0/k)^{1/2p}}\;
\int_{s(3/8)}^{s(5/8)} \,|u(y,s)|^2ds dy,
\endaligned
$$

\noindent where in the $t$ variable we have used that in the  interval
$t\in [3/8,5/8]$  (see \eqref{st})
$$
dt = \frac{\beta}{\alpha}\,\frac{t^2}{s^2} \,ds\sim
\frac{\beta}{\alpha}\,\frac{1}{s^2} \,ds,
$$
since $s(t)=\beta t/(\alpha(1-t)+\beta t)$,
$$
s(5/8)-s(3/8)=\frac{\alpha\,\beta\,(5/8-3/8)}{(\alpha \,3/8 +\beta
\,5/8)(\alpha \,5/8+\beta\, 3/8)}
\sim \frac{\alpha}{\beta},
$$
for $k \geq c_n$, and $s(5/8)> s(3/8) \uparrow 1$ as $k\uparrow \infty$  with
$s(3/8)\geq 1/2$ for $k\geq c_n$, and in the $x$ variable that
$$
y=\sqrt{\alpha\,\beta} x/(\alpha(1-t)+\beta t),
$$
so for $\,t\in [3/8,5/8]$
$$
y\sim \sqrt{\frac{\alpha}{\beta}}
\,|x|=\left(\frac{a_0}{k}\right)^{1/2p}\,|x|.
$$

Thus, taking
\begin{equation}
\label{ourR}
R \geq  \iota (k/a_0)^{1/2p},
\end{equation}
with $\,\iota=\iota(u) $ a constant to be determined, it follows that
$$
\Phi\geq c_n\,\frac{\beta}{\alpha}\,\int_{|y|\leq \iota}\;\int_{I}
\,|u(y,s)|^2ds dy,
$$
where the interval  $I=I_k=[s(3/8),s(5/8)]$ satisfies $\, I\subset [1/2,1]$ and $\,|I|\sim
\alpha/\beta$ for $k$ sufficiently large. Moreover, given $\epsilon>0$ there exists $k_0(\epsilon)>0$ such that 
 for any $k\geq k_0$ one has that $\,I_k\subset [1-\epsilon,1]$.

By hypothesis on $u(x,t)$, i.e. the continuity of $\,\|u(\cdot, s)\|_2$ at $s=1$, it follows that there exists $\,\iota>>1$ and $K_0=K_0(u)$ such that for any $k\geq K_0$ 
and for any $\,s\in I_k$
$$
\int_{|y|\leq \iota}\,|u(y,s)|^2dy\geq \|u(0)\|_2\,e^{-M}/10,
$$
which yields the desired result. Below we will fix $R\sim
k^{1/2(2-p)}>>k^{1/2p},\;p>1$ (see \eqref{ourrealR}), so we could have taken $\,\iota\sim
k^l,\;l=1/2(2-p)-1/2p=1/p(2-p)>0$, and take $\,\iota$ independent of $u$
when $k\uparrow \infty$.

\vskip.2in
Next, we deduce an upper bound for
\begin{equation}
\label{upper1}
\int_{|x|<R}\,\int_{1/32}^{31/32} (|\widetilde u|^2 + |\nabla \widetilde
u|^2)(x,t)dt dx.
\end{equation}

From \eqref{energy}
and the fact that
$$
\|\widetilde u(t)\|_2=\|u(s)\|_2,\;\;\;\;\;
s=\beta t/(\alpha(1-t)+\beta t),
$$
we have
$$
\int_{|x|<R}\,\int_{1/32}^{31/32} |\widetilde u(x,t)|^2 dt dx\leq
\|u(0)\|_2^2\,e^{2M},
$$
and from \eqref{estabove}
\begin{equation}
\label{above+}
\begin{aligned}
&\int_{1/32}^{31/32}\int_{|x|\leq R}|\nabla \widetilde u(x,t)|^2dxdt\\
\\&\leq
c_n \int_{1/32}^{31/32}\int_{|x|\leq R}
t(1-t)\frac{(1+|x|)^{2-p}}{(1+|x|)^{2-p}}\,
e^{\gamma |x|^p}\,|\nabla \widetilde u(x,t)|^2 dxdt\\
\\
&\leq c_n \gamma^{-1}\,R^{2-p} \,c^*\,k^{c_p}\,A^2_k\leq
c^*\,k^{c_p}\,e^{2 a_1\,k^{q/(q-p)} } = c^*\,
k^{c_p}\,e^{2 a_1\,k^{1/(2-p)}},
\end{aligned}
\end{equation}
using that $R$ is a power of $k$. Hence,
\begin{equation}
\label{conclu}
\int_{|x|<R}\,\int_{1/32}^{31/32} (|\widetilde u|^2 + |\nabla \widetilde
u|^2)(x,t)dt dx\leq c^*\,
k^{c_p} \,e^{2 a_1\,k^{1/(2-p)}},
\end{equation}
for $k\geq k_0(c^*)$ sufficiently large.

We now recall Lemma 3.1 in \cite{ekpv06}. 

\begin{lemma}
 \label{Lemma ole}
Assume that $R>0$ and $\varphi:[0,1]\longrightarrow\R$ is a smooth function. Then, there exists $c=c(n, \|\varphi'\|_{\infty}+\|\varphi''\|_{\infty})>0$ such that, the inequality
\begin{equation}
\label{previous}
\frac{\sigma^{3/2}}{R^2}\, \|e^{\sigma |\frac xR+\varphi(t)e_1|^2}g\|_{L^2(dxdt)}\leq
c\| e^{\sigma |\frac xR+\varphi(t)e_1|^2}(i \partial_t+\Delta)g\|_{L^2(dxdt)}
\end{equation}
holds, when $\sigma \ge cR^2$ and $g\in C_0^\infty(\R^{n+1})$ has its support contained in the set \[\{(x,t): |\tfrac xR+\varphi(t)e_1|\ge 1\}\ .\]
\end{lemma}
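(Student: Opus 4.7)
The plan is to carry out the standard Carleman estimate via exponential conjugation with the quadratic weight. Setting $\phi(x,t) = |\tfrac{x}{R} + \varphi(t)e_1|^2$ and $f = e^{\sigma\phi}g$, I would work with the conjugated Schr\"odinger operator $L_\sigma := e^{\sigma\phi}(i\partial_t+\Delta)e^{-\sigma\phi}$, which splits in the complex $L^2(\R^{n+1})$ inner product as $L_\sigma = \mathcal{S}+\mathcal{A}$ with symmetric and skew-symmetric parts
\[
\mathcal{S} = i\partial_t + \Delta + \sigma^2|\nabla\phi|^2, \qquad \mathcal{A} = -2\sigma\nabla\phi\cdot\nabla - \sigma\Delta\phi - i\sigma\phi_t.
\]
Since $g\in C_0^\infty(\R^{n+1})$ all boundary terms vanish, and the basic algebraic identity
\[
\|L_\sigma f\|_2^2 = \|\mathcal{S}f\|_2^2 + \|\mathcal{A}f\|_2^2 + \langle [\mathcal{S},\mathcal{A}]f,f\rangle
\]
reduces the desired inequality to the coercive lower bound $\langle[\mathcal{S},\mathcal{A}]f,f\rangle \geq c\sigma^3 R^{-4}\|f\|_2^2$.

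Next I would compute $[\mathcal{S},\mathcal{A}]$ using the elementary identities $D^2\phi = (2/R^2)I$, $\Delta\phi = 2n/R^2$, $|\nabla\phi|^2 = (4/R^2)\phi$, $\phi_t = 2\varphi'(x_1/R+\varphi)$, $\nabla\phi_t = (2\varphi'/R)e_1$, and $\Delta\phi_t = 0$. Because the Hessian of $\phi$ is a multiple of the identity, the nine bilinear commutators collapse to
\[
[\mathcal{S},\mathcal{A}] = \frac{32\sigma^3}{R^4}\phi - \frac{8\sigma}{R^2}\Delta + \sigma\phi_{tt} - 4i\sigma\nabla\phi_t\cdot\nabla,
\]
where the first two pieces are the main positive contributions coming respectively from $[\sigma^2|\nabla\phi|^2,-2\sigma\nabla\phi\cdot\nabla]$ and $[\Delta,-2\sigma\nabla\phi\cdot\nabla]$. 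Integrating $-\Delta$ against $f$ produces the nonnegative quadratic form
\[
\tfrac{32\sigma^3}{R^4}\int \phi|f|^2\,dx\,dt \;+\; \tfrac{8\sigma}{R^2}\int|\nabla f|^2\,dx\,dt,
\]
to which the two remaining error terms must be added.

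The main obstacle will be absorbing the error terms $\sigma\phi_{tt}$ and $-4i\sigma\nabla\phi_t\cdot\nabla$, and this is exactly where the hypothesis $\sigma\geq cR^2$ is needed. Because $\mathrm{supp}\,g \subset\{\phi\geq 1\}$, one has $|\phi_{tt}| = |2\varphi''(x_1/R+\varphi) + 2(\varphi')^2| \leq 2\|\varphi''\|_\infty\sqrt{\phi} + 2\|\varphi'\|_\infty^2 \leq C_\varphi \phi$, so $\sigma|\phi_{tt}|$ is absorbed into $(32\sigma^3/R^4)\phi$ once $\sigma^2\gtrsim C_\varphi R^4$. For the first-order term, $\nabla\phi_t$ is independent of $x$, so the operator $-4i\sigma\nabla\phi_t\cdot\nabla$ is self-adjoint and its quadratic form is bounded by $(8\sigma\|\varphi'\|_\infty/R)\int|\partial_1 f||f|$; a weighted Cauchy--Schwarz with $\varepsilon = 1/(2R\|\varphi'\|_\infty)$ yields a piece $\leq (4\sigma/R^2)\|\nabla f\|_2^2$, absorbed by the main gradient term, and a residual $\leq 4\sigma\|\varphi'\|_\infty^2\|f\|_2^2$, absorbed by $(32\sigma^3/R^4)\|f\|_2^2$ (using $\phi\geq 1$) as soon as $\sigma\gtrsim R^2\|\varphi'\|_\infty$.

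Combining these absorptions gives $\langle[\mathcal{S},\mathcal{A}]f,f\rangle \geq c\sigma^3 R^{-4}\|f\|_{L^2(dxdt)}^2$ for $\sigma\geq cR^2$, with $c$ depending on $n$ and $\|\varphi'\|_\infty+\|\varphi''\|_\infty$. Plugging this into the core identity produces $\|L_\sigma f\|_2^2 \geq c\sigma^3 R^{-4}\|f\|_2^2$; unwinding $f = e^{\sigma\phi}g$ and $L_\sigma f = e^{\sigma\phi}(i\partial_t+\Delta)g$ is precisely the announced Carleman inequality $\sigma^{3/2}R^{-2}\|e^{\sigma\phi}g\|_2 \leq c\|e^{\sigma\phi}(i\partial_t+\Delta)g\|_2$.
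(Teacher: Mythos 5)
Your proof is correct and follows the standard Carleman--estimate route that the paper (via its citation of Lemma 3.1 in \cite{ekpv06}) relies on: conjugate, split into symmetric part $\mathcal{S}=i\partial_t+\Delta+\sigma^2|\nabla\phi|^2$ and skew part $\mathcal{A}=-2\sigma\nabla\phi\cdot\nabla-\sigma\Delta\phi-i\sigma\phi_t$, invoke $\|L_\sigma f\|^2=\|\mathcal{S}f\|^2+\|\mathcal{A}f\|^2+\langle[\mathcal{S},\mathcal{A}]f,f\rangle$, and extract the leading term $\frac{32\sigma^3}{R^4}\phi-\frac{8\sigma}{R^2}\Delta$ from the commutator using $D^2\phi=\frac{2}{R^2}I$. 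The commutator computation, the bound $|\phi_{tt}|\lesssim\phi$ on $\{\phi\geq 1\}$, and the $\varepsilon$--Cauchy--Schwarz absorption of $-4i\sigma\nabla\phi_t\cdot\nabla$ into the two positive terms under $\sigma\gtrsim R^2$ are all carried out correctly, so the argument is complete.
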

\vskip.1in

First, we need to
show that
\begin {equation}
\label{claim32}
R>>\|\widetilde V\|_{L^{\infty}(\R^n\times [1/32, 31/32]}.
\end{equation}
But in this domain from \eqref{potencial} one sees that
$$
|\widetilde V(x,t)|\leq (32)^2\,\frac{\alpha}{\beta}\,\|V\|_{\infty} \leq
(32)^2\,\frac{a_0^{1/p}}{k^{1/p}}\,\|V\|_{\infty},
$$
from  \eqref{ourR} it is clear that  \eqref{claim32} holds. Define
\begin{equation}
\label{delta}
\delta(R)= \left(\,\int_{1/32}^{31/32}\int_{R-1\leq|x|\leq
R}\,(|\widetilde u|^2+\nabla \widetilde u|^2)(x,t)dxdt\right)^{1/2},
\end{equation}
We choose  $\varphi\in C^\infty([0,1])$ and  $\theta_R,\;\theta \in
C^{\infty}_0(\R^n)$ functions verifying
$$
0\le \varphi \le 3,\;\;\;\varphi(t) =3, \;\;\text{if}\;\,t\in [3/8,
5/8],\;\;\;\varphi(t)=0,\;\;\text{if}\;\;t\in [0,1/4]\cup
[3/4,1],
$$
$$\
\theta_R(x)=1,\;\;\text{if}\;\;|x|\leq R-1,
\;\;\;\;\theta_R(x)=0,\;\;\text{if}\;\;|x|>R,
$$
and
$$
\theta (x)=0,\;\;\text{if}\;\;|x|\leq 1,\;\;\;\;\theta
(x)=1,\;\;\text{if}\;\;|x|\ge 2.
$$
We define
\begin{equation}
\label{gg}
g(x,t)=\theta_R(x)\,\theta(\tfrac xR+\varphi(t)e_1)\,\widetilde u(x,t),
\end{equation}
and make the following remarks on $g(x,t)$:
\vskip.1in
- if $|x|\leq R/2,\;t\in[3/8,5/8]$, then $|x/R+\varphi(t)e_1|\geq
3-1/2=\frac{5}{2}>2$,  \newline

so $\;g(x,t)=\widetilde u(x,t)$,
and in this set $\;e^{\sigma\,|x/R+\varphi(t)e_1|^2}\geq e^{25 \sigma/4}$.
\vskip.1in
-if $|x|\geq R\;$ or $\;t\in [0,1/4]\cup [3/4,1]$, then $\,g(x,t)=0$, so
\newline
\vskip.01in
$\;supp\,g\subseteq \{|x|\leq R\times[1/4,3/4]\}\cap\{|x/R+\varphi(t)e_1|\geq 1\}$.
\vskip.1in
Then, if $\xi=x/R+\varphi(t)e_1$ we have
\begin{equation}
\label{3.13}
\begin{aligned}
&(i\partial_t +\Delta+\widetilde
V)g=[\theta(\xi)(2\nabla\theta_R(x)\cdot\nabla \widetilde u+
\widetilde  u\Delta
\theta_R(x) )+  2\nabla \theta(\xi)\cdot\nabla \theta_R\, \widetilde u\,]\\
&+\theta_R(x)\left[2R^{-1}\nabla\theta(\xi)\cdot\nabla \widetilde
u+R^{-2}\widetilde u\Delta\theta(\xi) +i\varphi'\partial_{x_1}\theta(\xi) \widetilde
u \right]\,=B_1+B_2.
\end{aligned}
\end{equation}
Note that
$$
supp\;B_1\subset\{(x,t)\in\R^n\times[0,1]\;:R-1\leq|x|\leq R\times[1/32,
31/32]\},
$$
and
$$
supp\;B_2\subset\{(x,t)\in\R^n\times[0,1]\;:\;1\leq|x/R+\varphi(t)e_1|\leq2\}.
$$
\vskip.1in

Now applying Lemma \ref{Lemma ole} choosing 
$$
\sigma=d_nR^2,\;\;\;\;\;\;d_n^2\geq \|\varphi''\|_{\infty}+\|\varphi'\|^2_{\infty}, 
$$
it follows that
\begin{equation}
\label{3.1}
\begin{aligned}
&R\, \|e^{\sigma |\frac xR+\varphi(t)e_1|^2}g\|_{L^2(dxdt)}\leq
c_n\| e^{\sigma |\frac xR+\varphi(t)e_1|^2}(i
\partial_t+\Delta)g\|_{L^2(dxdt)}
\\
\\&\leq
c_n \| e^{\sigma |\frac xR+\varphi(t)e_1|^2}\,\widetilde V g\|_{L^2(dxdt)}
+ c_n\| e^{\sigma |\frac xR+\varphi(t)e_1|^2}\,B_1\|_{L^2(dxdt)}\\
\\ &+
c_n \| e^{\sigma |\frac xR+\varphi(t)e_1|^2}\,B_2\|_{L^2(dxdt)}\equiv D_1+D_2+D_3.
\end{aligned}
\end{equation}

Since $R>>\|\widetilde V\|_{\infty}$, we can absorb $D_1$ in the left
hand side of \eqref{3.1}. 
On the support of $B_1$ one has $|x/R+\varphi(t)e_1|\leq 4$, thus
$$
D_2\leq c_n\,\delta(R)\,e^{16 \sigma}.
$$

On the support of $B_2$,  one has $|x|\leq R,\;t\in[1/32,31/32]$, and
\newline
$1\leq|x/R+\varphi(t)e_1|\leq2$, so
$$
D_3\leq c_n \,e^{4\,\sigma}\|\,|\widetilde u|+|\nabla \widetilde
u|\|_{L^2(|x|\leq R\times[1/32, 31/32])}.
$$

Combining this information, \eqref{lower1}, and \eqref{conclu} we have
\begin{equation}
\label{casi}
\begin{aligned}
& c_n\,e^{25 \sigma/4}\, \|u(0)\|_2\,e^{-M}\leq
R\,e^{25 \sigma/4}\,(\int_{|x|<R/2}\,\int_{3/8}^{5/8} |\widetilde
u(x,t)|^2dt dx)^{1/2}\\
\\&\leq c_n\,\delta(R)\,e^{16 \sigma} +c_n\,e^{4\,\sigma}\| |\widetilde
u|+|\nabla \widetilde u|\|_{L^2(|x|\leq R\times[1/32, 31/32])}\\
\\
&\leq c_n\,\delta(R)\,e^{16 \sigma} +c^*\,k^{c_p}\,e^{4\,\sigma}\,e^{2
a_1\,k^{1/(2-p)}}.
\end{aligned}
\end{equation}

Fixing 
\begin{equation}
\label{ourrealR}
\sigma=d_n\,R^2 =2\,a_1k^{1/(2-p)},
\end{equation}
it follows from \eqref{casi} that, if $\,\|u(0)\|_2\neq 0$, for $k$ large, 
\begin{equation}
\label{uno-1}
\delta(R)\,\geq c_n \|u(0)\|_2\,e^{-M}\,e^{-10 \sigma}=c_n
\|u(0)\|_2\,e^{-M}\,e^{-20\,a_1\,k^{1/(2-p)}},
\end{equation}
for $k\geq k_0(c^*)$ sufficiently large.

We now use our upper bounds for $\delta(R)$ deduced in \eqref{estabove}
\begin{equation}
\label{dos}
\begin{aligned}
&\delta^2(R)= \,\int_{\tfrac{1}{32}}^{\tfrac{31}{32}}\int_{R-1\leq|x|\leq
R}\,(|\widetilde u|^2+\nabla \widetilde u|^2)(x,t)dxdt\\
\\
& \leq  \,\int_{\tfrac{1}{32}}^{\tfrac{31}{32}}\int_{R-1\leq|x|\leq
R}\,e^{\gamma |x|^p} e^{-\gamma |x|^p}\,|\widetilde u|^2(x,t)dxdt\\
\\
&+ c_n \,\int_{\tfrac{1}{32}}^{\tfrac{31}{32}}\int_{R-1\leq|x|\leq R}
t(1-t)\,\frac{(1+|x|)^{2-p}}
{(1+|x|)^{2-p}} \,e^{\gamma |x|^p} e^{-\gamma |x|^p}\,|\nabla \widetilde u|^2\,dxdt\\
\\
\\
&\leq c_n\,e^{-\gamma\,(R-1)^p}\, \sup_{[0,1]}\|e^{\gamma |x|^p/2}
\widetilde u(t)\|_2^2\\\
\\
& + c_n\,\gamma^{-1} \,R^{2-p}\,e^{-\gamma\,(R-1)^p}
\int_{\tfrac{1}{32}}^{\tfrac{31}{32}}\int_{R-1\leq|x|\leq R}
\,\frac{t(1-t)}
{(1+|x|)^{2-p}} e^{\gamma |x|^p} \,|\nabla \widetilde u|^2\,dxdt\\
\\
&\leq c^*\,k^{c_p}\,e^{2a_1 k^{1/(2-p)}}\,e^{-\gamma\,(R-1)^p}.
\end{aligned}
\end{equation}

Gathering the information in \eqref{uno-1}, \eqref{dos}, \eqref{ourrealR}, and
\eqref{constants}  one obtains that
\begin{equation}
\label{popov}
\begin{aligned}
&c_n\, \|u(0)\|^2_2\,e^{-2 M}\leq
c^*\,k^{c_p}\,e^{42\,a_1\,k^{1/(2-p)}-\gamma\,(R-1)^p}
\\
\\
&\leq c^*\,k^{c_p}\,e^{42\,a_1\,k^{1/(2-p)} -
a_0^{1/2}\,(2a_1/d_n)^{p/2}\,k^{1/(2-p)} + O(k^{(1/2(2-p)})}.
\end{aligned}
\end{equation}

Hence, if
\begin{equation}
\label{refer}
42\,a_1<
a_0^{1/2}\,a_1^{p/2}\,(2/d_n)^{p/2},\;\;\,\,\;\;\text{i.e.}\;\;\;\;\;(42)^2\,(d_n/2)^p <
a_0\,a_1^{p-2},
\end{equation}
by letting $k$ tends to infinity it follows from \eqref{popov} that
$\,\|u(0)\|_2\equiv 0$,
which completes our proof.
\end{section}

\begin{section}{Proof of Corollary  \ref{Corollary 2} and Theorem \ref{Theorem 3}}\label{S: 22}

\begin{proof}[Proof of Corollary  \ref{Corollary 2}]
Since
$$
\int\;|u(x,1)|^2\,e^{2\,b\,|x|^q}\,dx\,<
\infty,\;\;\;\;\;\;\text{with}\;\;\;\;b=\beta^q/q,
$$
one has that
$$
\int\;|u(x,1)|^2\,e^{2\,k\,|x|^p}\,dx
\leq
\|e^{2k|x|^p-2b|x|^q}\|_{\infty}\,\int\;|u(x,1)|^2\,e^{2\,b\,|x|^q}\,dx.
$$

A simple calculation shows that
$$
\|e^{2k|x|^p-2b|x|^q}\|_{\infty} = e^{2k|x|^p-2b|x|^q}|_{|x|=r_0},
$$
where
$$
r_0=\left(\frac{pk}{bq}\right)^{1/(q-p)}.
$$
Thus,
$$
\|e^{2k|x|^p-2b|x|^q}\|_{\infty} =  \,e^{2\,a_1 \,k^{q/(q-p)}},
$$
with
\begin{equation}
\label{c1}
a_1=\frac{1}{b^{p/(q-p)}}\,\left[\,\left(\frac{p}{q}\right)^{p/(q-p)}-\left(\frac{p}{q}\right)^{q/(q-p)}\right]
=   c_p\,\frac{1}{b^{p/(q-p)}}.
\end{equation}
Inserting this value in the hypothesis \eqref{conditionp} of Therem 1 we
obtain
$$
a\,\left(\frac{1}{b^{p/(q-p)}}\right)^{p-2} > c_p^{2-p} \,M_p,
$$
with $\,a=\alpha^p/p\,$ and $\,b=\beta^q/q$.  Since  $\;q(2-p)/(q-p)=1$ this gives us 
$$
\alpha\,\beta> N_p,
$$
which yields the result.

\end{proof}
\vskip.1in

\begin{proof}[Proof of Theorem \ref{Theorem 3}]

We just apply Corollary \ref{Corollary 2}  with
$$
u(x,t)=(u_1-u_2)(x,t),
$$
and
$$
V(x,t)= \frac{F(u_1,\overline u_1)- F(u_2,\overline u_2)}{u_1-u_2}.
$$
\end{proof}
\end{section}

\begin{section}{Proof of Theorem \ref{Theorem 4}}\label{S:  pppp}

We shall follow closely the argument used  in the proof of Theorem \ref{Theorem 1}, sections 2-3. Thus, we divide the reasoning  in steps.

First, we deduce the corresponding upper bounds.  Assume $\,\|u(t)\|_2=a\neq 0$. Fix $\,\overline t\in (0,1)$ near $1$, and let
\begin{equation}
\label{4.1}
v(x,t)=u(x,t-1+\overline t),\;\;\;t\in[0,1],
\end{equation}
which satisfies the same equation \eqref{E: NLS1} with
\begin{equation}
\label{4.2}
\begin{aligned}
&|v(x,0)|\leq \frac{b_1}{(2-\overline t)^{n/2}} \,e^{-\frac{b_2 |x|^p}{(2-\overline t)^p}},\\
&|v(x,1)|\leq \frac{b_1}{(1-\overline t)^{n/2}} \,e^{-\frac{b_2|x|^p}{(1-\overline t)^p}}.
\end{aligned}
\end{equation}
So using the notation
\begin{equation}
\label{4.3}
A_0=\frac{b_2}{(2-\overline t)^p},\;\;\;\;\;\;A_1=\frac{b_2}{(1-\overline t)^p},
\end{equation}
one has that
\begin{equation}
\label{4.4}
\begin{aligned}
&\int \,|v(x,0)|^2 \,e^{A_0|x|^p}dx\equiv  a_0^2,\\
&\int \,|v(x,1)|^2 \,e^{A_1|x|^p}dx\equiv a_1^2.
\end{aligned}
\end{equation}

In the remainder of this section $\,c\,$ will denote a constant which may depend on $n,\,b_0,b_1$ and $ b_2$, but is independent of the value of $\,\overline t$.
\begin{equation}
\label{4.5}
V(x,t)=F(u, \overline u),
\end{equation}
so by hypothesis
\begin{equation}
\label{4.6}
|V(x,t)|\leq c\,|u(x,t-1+\overline t)|^{\theta} \leq \frac{c}{(2-t-\overline t)^{\theta n/2}}\,
e^{-\frac{c  |x|^p}{(2-t-\overline t)^p}}.
\end{equation}

We use the conformal or Appell transformation.
If   $v(y,s)$ verifies
\begin{equation}
\label{4.6b}
\partial_sv=i\left(\triangle
v+V(y,s)v\right),\;\;\;\;\;\;\;(y,s)\in \R^n\times [0,1],
\end{equation}
and $\alpha$ and $\beta$ are positive,  then
\begin{equation}
\label{4.7}
\widetilde u(x,t)=\left(\tfrac{\sqrt{\alpha\beta}}{\alpha(1-t)+\beta
t}\right)^{\frac n2}v\left(\tfrac{\sqrt{\alpha\beta}\,
x}{\alpha(1-t)+\beta t}, \tfrac{\beta t}{\alpha(1-t)+\beta
t}\right)e^{\frac{\left(\alpha-\beta\right) |x|^2}{4i(\alpha(1-t)+\beta
t)}},
\end{equation}
verifies
\begin{equation}
\label{4.8}
\partial_t\widetilde u=i\left(\triangle \widetilde u+\widetilde
V(x,t)\widetilde u +\widetilde F(x,t)\right),\;\; \text{in}\  \R^n\times
[0,1],
\end{equation}
with
\begin{equation}
\label{4.9}
\widetilde V(x,t)=\tfrac{\alpha\beta}{\left(\alpha(1-t)+\beta
t\right)^2}\,V\left(\tfrac{\sqrt{\alpha\beta}\, x}{\alpha(1-t)+\beta t},
\tfrac{\beta t}{\alpha(1-t)+\beta t}\right),
\end{equation}
\begin{equation}
\label{externalforce2}
\widetilde F(x,t)=\left(\tfrac{\sqrt{\alpha\beta}}{\alpha(1-t)+\beta
t}\right)^{\frac n2+2}F\left(\tfrac{\sqrt{\alpha\beta}\,
x}{\alpha(1-t)+\beta t}, \tfrac{\beta t}{\alpha(1-t)+\beta
t}\right)e^{\frac{\left(\alpha-\beta\right) |x|^2}{4i(\alpha(1-t)+\beta
t)}},
\end{equation}
and
\begin{equation}
\label{4.10}
\begin{aligned}
&\|e^{\gamma |x|^p}\,\widetilde u(x,0)\|_2=\|e^{\gamma(\alpha/\beta)^{p/2}\,|x|^p}\,v(x,0)\|_2=a_0,\\
&\|e^{\gamma |x|^p}\,\widetilde u(x,1)\|_2=\|e^{\gamma(\beta/\alpha)^{p/2}\,|x|^p}\,v(x,1)\|_2=a_1.
\end{aligned}
\end{equation}
We want
$$
2\,\gamma(\alpha/\beta)^{p/2}=A_0,\;\;\;\;2\,\gamma(\beta/\alpha)^{p/2}=A_1,
$$
therefore
\begin{equation}
\label{cconstants}
2\,\gamma=(A_0A_1)^{1/2},\;\;\;\;\;\alpha=A_0^{1/p},\;\;\;\;\;\;\beta=A_1^{1/p}.
\end{equation}
Since
$$
A_0\simeq 1,\;\;\;\;\;\;A_1\simeq \frac{1}{(1-\overline t)^p},
$$
it follows that
\begin{equation}
\label{constants2}
\gamma\simeq \frac{1}{(1-\overline t)^{p/2}},\;\;\;\;\;\;\beta\simeq\frac{1}{(1-\overline t)},\;\;\;\;\;\;\alpha\simeq 1.
\end{equation}

Notice that here the factor $\,1/(1-\overline t)\,$ with $\,\overline t\uparrow 1\;$ plays the role of $\,k\in \Z^+$ with $\,k\uparrow \infty$ in the proof of Theorem 
\ref{Theorem 1} in sections 2 and 3.

Next, we shall estimate  $\|\widetilde V\|_{L^1_tL^{\infty}_{|x|>R}}.$
Thus, 
\begin{equation}
\label{pott1}
|\widetilde V(x,t)|\leq \frac{ \beta}{\alpha}\,|V(y,s)|\leq \frac{\beta}{\alpha}\,\frac{c}{(1-\overline t)^{\theta n/2}}\,e^{- c |y|^p},
\end{equation}
with
$$
|y|= \frac{\sqrt{\alpha \beta} |x|}{\alpha(1-t)+\beta t}\geq \frac {R \sqrt{\alpha}}{\sqrt{\beta}}\simeq \frac{R}{\sqrt{\beta}}=
cR(1-\overline t)^{1/2}.
$$
Thus,
$$
\|\widetilde V\|_{\infty}\leq \frac{\beta}{\alpha}\,\| V\|_{\infty}\leq \frac{c}{1-\overline t}\,\|V\|_{\infty}
\leq \frac{c}{(1-\overline t)^{1+\theta n/2}},
$$
and 
 \begin{equation}
 \label{estt}
 \|\widetilde V(x,t)\|_{L^1_tL^{\infty}(|x|\geq R)} \leq  \| V(y,s)\|_{L^1_sL^{\infty}(|y|\geq cR/\sqrt{\beta})}
 \leq \frac{c}{(1-\overline t)^{\theta n/2}}e^{-  c R^p (1-\overline t)^{p/2}}.
  \end{equation}

To apply Lemma \ref{ultimo} we need 
 $$
  \|\widetilde V(x,t)\|_{L^1_tL^{\infty}(x|\geq R)}\leq \frac{c}{(1-\overline t)^{\theta n/2}}               
 \,e^{-  c R^p (1-\overline t)^{p/2}}\leq \epsilon_0,
 $$
 so  we take in the upper bounds part of the proof
 \begin{equation}
\label{estimateR1}
R^p\simeq \frac{c}{(1-\overline t)^{p/2}}\,log\left(\frac{c}{\epsilon_0\,(1-\overline t)^{\theta n/2}}
\right),
\end{equation}
or
 \begin{equation}
\label{estimateR2}
R\simeq \frac{c}{(1-\overline t)^{1/2}}\,log^{1/p}\left(\frac{c}{\epsilon_0\,(1-\overline t)^{\theta n/2}}
\right).
\end{equation}
 
 Therefore, splitting the term $\,\widetilde V \widetilde u\,$ as in section 2
 \begin{equation}
\label{split2}
\mathbb V=\widetilde V\,\chi_{(|x|>R)}(x),\;\;\;\;\;\;\;\;\;\;\mathbb
F=\widetilde V\,\chi_{(|x|\leq R)}(x)\,\widetilde u(x,t),
\end{equation}
a combination of 
 Lemma \ref{ultimo} and the identity \eqref{est1} yields the estimate
 \begin{equation}
 \label{44.1}
 \begin{aligned}
& \sup_{[0,1]} \|e^{\gamma |x|^p}\tilde u(t)\|_2^2
 \leq c(\|e^{\gamma |x|^p}\tilde u(0)\|_2^2+\|e^{\gamma |x|^p}\tilde u(1)\|_2^2+c 
 a^2\,\|\widetilde V\|^2_{L^{\infty}}\,e^{c\gamma R^p})\\
 \\
 &\leq c\,(a_0^2 +a_1^2+c\,a^2\,\frac{1}{(1-\overline t)^{2+\theta n}}\,e^{cR^p/(1-\overline t)^{p/2}})\\
 \\
&\leq  c\, a^2\,e^{\frac{c}{(1-\overline t)^p}\,log(\frac{c}{\epsilon_0(1-\overline t)^{\theta n/2}})},
\end{aligned}
\end{equation}
\end{section}
where $\,a=\|u_0\|_2$.

We observe that in this case, the upper bound in \eqref{44.1} is coming from the \lq\lq external force" term $\,F=\widetilde V\,\chi_{(|x|\leq R)}(x)\,\widetilde u(x,t)$, and not from the data at time $t=0,\,1$ of $\,\widetilde u\,$ as in the proof of Theorem \ref{Theorem 1}.

Next, using the same argument  given in section 2,  \eqref{771}-\eqref{estabove}, one finds that
\begin{equation}
\label{ww1}
\begin{aligned}
&\gamma\,\int_0^1\int t(1-t)\frac{1}{(1+|x|)^{2-p}}\,|\nabla \widetilde
u(x,t)|^2 e^{\gamma |x|^p}dxdt \\
\\
&\leq  c \,a^2\, e^{\frac{c}{(1-\overline t)^p}\,log(\frac{c}{\epsilon_0(1-\overline t)^{\theta n/2}})}.
 \end{aligned}
 \end{equation}

Now we turn to the lower bounds estimates. Since they  are similar to those given in detail in section 3
we just sketch them. As in \eqref{claim32}
we first need
\begin{equation}
\label{finalbound}
\|\widetilde V\|_{L^{\infty}(\R^n\times [1/32,31/32])}<<R.
\end{equation}

We take in this lower bound part
\begin{equation}
\label{771a}
\sigma=d_nR^2,\;\;\;\;\;\;\;\;\;\;R=\,\frac{c}{(1-\overline t)^{p/2(2-p)}}.
\end{equation}
Since in the time interval $[1/32,31/32]$ one has that $\;\alpha(1-t)+\beta t\simeq \beta$, it follows that
\begin{equation}
\label{770}
\|\widetilde V\|_{L^{\infty}(\R^n\times [1/32,31/32])}\leq \,c\, \frac{ \alpha}{\beta}\;\|V\|_{\infty}
\leq \frac {c}{(1-\overline t)^{\theta n/2-1}}.
\end{equation}

Therefore, \eqref{finalbound} holds if
$$
\frac{\theta n}{2} - 1< \frac{p}{2(2-p)},
$$
or equivalently, 
$$
p>\,\frac{2(\theta n-2)}{\theta n-1},
$$ 
which is exactly our hypothesis, so \eqref{finalbound} holds

Finally, we also need that $\;e^{c\gamma R^p} \,$ to be larger than our upper bound, i.e.
\begin{equation}
\label{uu}
 e^{\frac{c}{(1-\overline t)^p}\,log(\frac{c}{\epsilon_0(1-\overline t)^{\theta n/2}})}
 << e^{c\gamma R^p}=e^{\frac{c}{(1-\overline t)^{p/2}}\,\frac{c}{(1-\overline t)^{p\cdot p/2(2-p)}}}.
 \end{equation}
So it suffices to have
$$
p<p/2+p^2/2(2-p),
$$
which holds if $\,p>1$. This completes  the proof of Theorem \ref{Theorem 4}.

\vskip.1in

\underline{Remark} Let us consider the case $\,\theta=4/n$ in Theorem \ref{Theorem 4}, so that our 
requirement is $\,p>4/3$. The proof of Theorem  \ref{Theorem 4}, in fact, also gives the following result:

\begin{theorem}
\label{Theorem 5}

Assume that $\,u\in C([0,1]:L^2(\mathbb R^n))$ satisfies the equation
$$
i \partial_t u=\Delta u + V(x,t) u,\;\;\;\;\;(x,t)\in \mathbb R^n\times [0,1],
$$
with $V(x,t)$ complex valued, 
$$
|V(x,t)|\leq \frac{c}{(1-t)^2},
$$

\begin{equation}
\label{007}
\lim_{R\to\infty} \|V\|_{L^1([0,1]:L^{\infty}(|x|\geq R))}=0,
\end{equation}

and
$$
|u(x,t)|\leq \frac{c_1}{(1-t)^{n/2}}\,Q\left(\frac{x}{1-t}\right),\;\;\;\;\text{with}\;\;\;\;Q(y)=e^{-c_2|y|^p}.
$$

If $p>4/3$, then $\;u \equiv 0$.

\end{theorem}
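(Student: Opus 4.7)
The plan is to follow the proof of Theorem \ref{Theorem 4} step by step, replacing the bound $|V|\le c(1-t)^{-\theta n/2}$ used there (derived from $|F(z,\bar z)|\le c|z|^\theta$ together with the decay of $u$) with the direct hypothesis $|V(x,t)|\le c/(1-t)^2$. Since $\theta n/2 = 2$ in this comparison, the critical exponent $p > 2(\theta n - 2)/(\theta n - 1)$ of Theorem \ref{Theorem 4} specializes to the hypothesis $p > 4/3$ of the present statement.

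To set up, fix $\overline t \in (0,1)$ close to $1$ and perform the analogue of the translation \eqref{4.1} (here a suitable rescaling, since $u$ is only defined on $[0,1]$) to reduce to a solution $v$ on $[0,1]$ with potential still bounded by $c/(1-s)^2$ in the new $s$-variable, and with weighted $L^2$ norms $\|e^{A_0|x|^p/2}v(\cdot,0)\|_2$ and $\|e^{A_1|x|^p/2}v(\cdot,1)\|_2$ finite, where $A_0\simeq 1$ and $A_1\simeq (1-\overline t)^{-p}$. Apply the Appell transformation \eqref{4.7} with $\alpha=A_0^{1/p}\simeq 1$ and $\beta=A_1^{1/p}\simeq (1-\overline t)^{-1}$ to obtain a new solution $\widetilde u$ of \eqref{4.8} on $[0,1]$, with no external force (since the equation is linear) and with symmetric Gaussian weight parameter $\gamma\simeq (1-\overline t)^{-p/2}$ at both endpoints.

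The key computation is the bound on the transformed potential $\widetilde V$ on the slab $\Rn\times[1/32,31/32]$. Since $1-s\simeq\alpha/\beta\simeq(1-\overline t)$ there, one has $|V(\cdot,s)|\le c/(1-s)^2\le c/(1-\overline t)^2$, while the Appell prefactor $\alpha\beta/(\alpha(1-t)+\beta t)^2\simeq\alpha/\beta\simeq(1-\overline t)$; multiplying these yields
\[ \|\widetilde V\|_{L^\infty(\Rn\times[1/32,31/32])}\le c/(1-\overline t), \]
which is the analogue of \eqref{770} for $\theta n/2 = 2$. The corresponding $L^1_tL^\infty_{|x|>R}$ estimate analogous to \eqref{estt}, which tends to $0$ as $R\to\infty$ by hypothesis \eqref{007}, is handled in the same way.

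With these in hand, the upper bound \eqref{44.1} and its gradient variant \eqref{ww1} follow from Lemma \ref{ultimo} with the splitting \eqref{split2} and the Gaussian representation \eqref{est1}; the lower bound follows from Lemma \ref{Lemma ole} with $\sigma=d_nR^2$ and $R\simeq(1-\overline t)^{-p/2(2-p)}$ as in \eqref{771a}. The admissibility condition \eqref{finalbound}, $\|\widetilde V\|_\infty\ll R$, now reads $1/(1-\overline t)\ll(1-\overline t)^{-p/2(2-p)}$, equivalent to $p>4/3$; the final comparison \eqref{uu} reduces again to $p>1$. Letting $\overline t\uparrow 1$ forces $\|u_0\|_2=0$ and hence $u\equiv 0$. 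The main obstacle is precisely the verification that $\|\widetilde V\|_\infty$ scales as $(1-\overline t)^{-1}$ rather than $(1-\overline t)^{-2}$ after the Appell transformation; this one-power gain, provided by the conformal prefactor $\alpha/\beta$, is what makes the Carleman argument applicable in the range $p>4/3$, and everything else is bookkeeping parallel to Theorem \ref{Theorem 4}.
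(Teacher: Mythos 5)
Your proposal is correct and reconstructs exactly the argument the paper has in mind: the paper presents Theorem \ref{Theorem 5} as a direct by-product of the proof of Theorem \ref{Theorem 4}, noting that the only role of the nonlinear structure there was to produce the two inputs $|V(x,t)|\le c/(1-t)^{\theta n/2}$ and the vanishing of $\|V\|_{L^1_tL^\infty(|x|\ge R)}$ as $R\to\infty$, which are now assumed directly with $\theta n/2=2$. You correctly isolate the single nontrivial verification --- that the conformal prefactor $\alpha\beta/(\alpha(1-t)+\beta t)^2\simeq\alpha/\beta$ on $[1/32,31/32]$ knocks one power of $(1-\overline t)^{-1}$ off the potential, giving $\|\widetilde V\|_{L^\infty(\mathbb R^n\times[1/32,31/32])}\lesssim(1-\overline t)^{-1}$ and hence $p>4/3$ from \eqref{finalbound} --- and you correctly note that \eqref{uu} only requires $p>1$ and that hypothesis \eqref{007} replaces the exponential spatial decay of $V$ in deriving the $L^1_tL^\infty_{|x|\ge R}$ smallness. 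The remark that the time translation of \eqref{4.1} must be replaced by a rescaling of $[0,\overline t]$ onto $[0,1]$, because $u$ is only defined on $[0,1]$ rather than $(-1,1)$, is a genuine but routine adjustment, and it does not change any of the scalings.
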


It turns out that for complex potentials $V(x,t)$ as in Theorem \ref{Theorem 5}, without the hypothesis \eqref{007}, the restriction $\,p>4/3$  is indeed necessary. This can be seen by performing a pseudo-conformal transformation to the stationary solution furnished by Meshkov's example in \cite{Mes}.

\begin{section}{Appendix}\label{S:  Algunos Lemmas}
We shall prove that for any $n\in\Z^+$ and any $p\in(1,2)$ there exists
$c=c(n,p)>1$ such that
for any $x\in \R^n$
$$
\frac{1}{c}\, e^{|x|^p/p}\leq \int_{\R^n}e^{\lambda\cdot
x-|\lambda|^q/q}|\lambda|^{n(q-2)/2}d\lambda\leq c\, e^{|x|^p/p},
$$
i.e.
\begin{equation}
\label{goal}
I_n= \int_{\R^n}e^{\lambda\cdot
x-|\lambda|^q/q}|\lambda|^{n(q-2)/2}d\lambda \sim e^{|x|^p/p}
\end{equation}
with $1/q+1/p=1$.

We shall consider only the case $n\geq 2$ since the case $n=1$ follows
directly from Proposition \ref{stir}.   Assuming that  for any $\mu>1$
\begin{equation}
\label{step1}
\Omega_n=\int_0^{\pi} e^{\mu \cos(\theta)}(\sin(\theta))^{n-2} d\theta\sim
\frac{e^{\mu}}{\mu^{(n-1)/2}},
\end{equation}
we shall prove \eqref{goal}.

Using polar coordinates in $\R^n$ and  \eqref{step1} it follows that
$$
\aligned
I_n&=c_n\int_0^{\infty}\int_0^{\pi}e^{|x|r\cos(\theta)-r^q/q}r^{n(q-2)/2}r^{n-1}(\sin(\theta))^{n-2}d\theta
dr\\
\\
&\sim \int_0^{\infty}\frac{e^{|x|r-r^q/q}}{|x|^{(n-1)/2}}\,
r^{n(q-2)/2}r^{(n-1)/2}dr\\
\\
&=\frac{1}{|x|^{(n-1)/2}}\int_0^{\infty}e^{-|x|(-r+r^q/q|x|)}r^{n(q-1)/2-1/2}dr=\widetilde
I_n.
\endaligned
$$

We recall Stirling's formula (see Proposition 2.1, page 323  in \cite{StSh}).

\begin{proposition}
\label{stir}

If $\Psi$ is a real valued function such that $\Psi'(x_0)=0$ and
$\Psi''(x)>0$ for  $x\in[a,b]$,
and $\Phi $ is a smooth function, then
$$
\int_a^be^{-s\Psi(x) } \Phi(x)dx =
e^{-s\Psi(x_0)}\left[\frac{\sqrt{2\pi}}{s^{1/2}}\frac{\Phi(x_0)}{(\Psi''(x_0))^{1/2}}+O\left(\frac{1}{s}\right)\right],\;\;\;\text{as}\;\;\;s\to\infty.
$$
\end{proposition}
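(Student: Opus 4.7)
The plan is to apply Laplace's method in its standard form. Since $\Psi'(x_0)=0$ and $\Psi''>0$ throughout $[a,b]$, the point $x_0$ is the unique minimum of $\Psi$ on $[a,b]$, and there exists $c_0>0$ (depending on $\min_{[a,b]}\Psi''$) with $\Psi(x)-\Psi(x_0)\ge c_0(x-x_0)^2$ on $[a,b]$. The strategy is to split the integral into a shrinking window around $x_0$, where a quadratic Taylor expansion of $\Psi$ produces a rescaled Gaussian, and a complementary tail that is super-exponentially small after the $e^{-s\Psi(x_0)}$ is factored out.

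First I would localize. Set $\delta_s=s^{-2/5}$ (any exponent in $(-1/2,-1/3)$ works) and $J_s=[x_0-\delta_s,x_0+\delta_s]\cap[a,b]$. On the complement one has $s(\Psi(x)-\Psi(x_0))\ge c_0 s^{1/5}$, so
$$\int_{[a,b]\setminus J_s} e^{-s\Psi(x)}\Phi(x)\,dx = e^{-s\Psi(x_0)}\,O\!\left(e^{-c_0 s^{1/5}}\right),$$
which is smaller than any polynomial power of $1/s$ and thus absorbs into the $O(1/s)$ remainder.

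Next, on $J_s$ I would Taylor-expand $\Psi(x)=\Psi(x_0)+\tfrac12\Psi''(x_0)(x-x_0)^2+r(x)$, with $|r(x)|\le C|x-x_0|^3$, and $\Phi(x)=\Phi(x_0)+(x-x_0)\Phi'(x_0)+O((x-x_0)^2)$. The substitution $u=\sqrt{s\Psi''(x_0)}(x-x_0)$ transforms the integral into
$$e^{s\Psi(x_0)}\int_{J_s} e^{-s\Psi(x)}\Phi(x)\,dx = \frac{1}{\sqrt{s\Psi''(x_0)}}\int_{|u|\le U_s} e^{-u^2/2}\,e^{-sr(x(u))}\!\left[\Phi(x_0)+\tfrac{u\,\Phi'(x_0)}{\sqrt{s\Psi''(x_0)}}+O(u^2/s)\right]du,$$
with $U_s=\sqrt{s\Psi''(x_0)}\,\delta_s\to\infty$. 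Expanding $e^{-sr}=1-sr+O((sr)^2)$ and using $sr=\tfrac{s\Psi'''(x_0)}{6}(x-x_0)^3+O(s(x-x_0)^4)=O(u^3/\sqrt s)+O(u^4/s)$, the cubic piece paired with $\Phi(x_0)e^{-u^2/2}$ vanishes by odd symmetry on the symmetric $u$-window. The remaining corrections — the even $O(u^4/s)$ piece of $sr$, the $O(u^2/s)$ piece of $\Phi$, the odd (hence cancelling) $O(u/\sqrt s)$ piece of $\Phi$, and $(sr)^2=O(u^6/s)$ — each contribute $O(1/s)$ after pairing with $e^{-u^2/2}$ and integrating. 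Extending the $u$-integration from $|u|\le U_s$ to all of $\mathbb R$ costs only $O(e^{-U_s^2/2})$. The main Gaussian integral $\int_{\mathbb R} e^{-u^2/2}\,du=\sqrt{2\pi}$ then supplies the stated leading term $\sqrt{2\pi}\,\Phi(x_0)/\sqrt{s\Psi''(x_0)}$, and gathering the error contributions gives the $O(1/s)$ inside the bracket.

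The main subtlety is the odd-symmetry cancellation of the cubic Taylor term $sr$: it is precisely this cancellation that pushes the correction to order $1/s$ inside the bracket rather than the naive $O(1/\sqrt s)$ one would obtain without exploiting symmetry. The other delicate point is calibrating $\delta_s$ so that simultaneously the tail factor $e^{-c_0 s\delta_s^2}$ is super-polynomially small and the Taylor remainder $sr(x)=O(s\delta_s^3)$ stays small throughout $J_s$, permitting the uniform expansion of $e^{-sr}$; the exponent $\delta_s=s^{-2/5}$ achieves both.
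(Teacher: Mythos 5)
The paper supplies no proof of this proposition: it is quoted verbatim from Stein and Shakarchi (Proposition~2.1, p.~323 of \cite{StSh}) and invoked without derivation. Your argument is a complete and correct proof by the standard Laplace/steepest-descent method, and it is in substance the textbook argument: choose a localization window $|x-x_0|\le\delta_s$ with $\delta_s=s^{-\gamma}$, $\gamma\in(1/3,1/2)$, so that the tail is superpolynomially small while $s\,r(x)=O(s\delta_s^3)\to 0$ uniformly; rescale by $u=\sqrt{s\Psi''(x_0)}\,(x-x_0)$; and use the odd-symmetry cancellation of the cubic Taylor correction to improve the naive $O(s^{-1/2})$ error to $O(1/s)$. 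Your bookkeeping of the error terms ($-sr$ cubic piece odd, $-sr$ quartic piece $O(u^4/s)$, $(sr)^2=O(u^6/s)$, linear $\Phi$ piece odd, quadratic $\Phi$ piece $O(u^2/s)$, cross term $O(u^4/s)$) is correct and in fact gives the slightly sharper absolute error $O(s^{-3/2})$, which of course sits inside the stated $O(1/s)$.

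Two points worth making explicit, since you left them implicit: you are assuming $x_0\in(a,b)$ (required for the symmetric window and for the constant to be $\sqrt{2\pi}$ rather than $\sqrt{\pi/2}$), and you are assuming $\Psi\in C^4$ and $\Phi\in C^2$ near $x_0$ (required to write $r(x)=\tfrac16\Psi'''(x_0)(x-x_0)^3+O((x-x_0)^4)$; with only $\Psi\in C^3$ the odd-symmetry cancellation fails and one gets $O(s^{-1/2})$, not $O(1/s)$). Both are consistent with the proposition's intended reading --- the paper's ``smooth'' and the application in the appendix (where $\Psi$, $\Phi$ are real-analytic and $r_0$ is interior for $|x|$ large) cover them --- but since the statement's hypotheses are somewhat terse, it is worth recording where the regularity is used.
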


In our case
$$
\aligned
&\Psi(r)=-r+r^q/q|x|,\;\;\;\;\;\;\Psi'(r)=-1+r^{q-1}/|x|=0\;\;\text{if}\;\;r=r_0=|x|^{1/(q-1)},\\
\\
&\Psi''(r)=(q-1)r^{q-2}/|x|,\;\;\;\Psi''(r_0)=(q-1)|x|^{-1/(q-1)},\\
\\
&
\Psi(r_0)=-\frac{1}{p}|x|^{1/(q-1)},
\endaligned
$$
and
$$
\Phi(r)=r^{n(q-1)/2-1/2},\;\;\;\text{so}\;\;\;\Phi(r_0)=|x|^{n/2-1/2(q-1)}.
$$
Therefore,
$$
\widetilde I_n \sim
\frac{1}{|x|^{(n-1)/2}}e^{-|x|\Psi(r_0)}\frac{1}{|x|^{1/2}}\frac{\Phi(r_0)}{(\Psi(r_0))^{1/2}}
\sim
\frac{e^{|x|^p/p}}{|x|^{n/2}}\frac{|x|^{n/2-1/2(q-1)}}{|x|^{-1/2(q-1)}}\sim
e^{|x|^p/p},
$$
which proves \eqref{goal}.

It remains to prove \eqref{step1}. Changing variables and recalling the
fact that $\mu>1$ and $n\geq 2$ we have
$$
\aligned
&\Omega_n=\int_0^{\pi}e^{\mu
\cos(\theta)}(\sin(\theta))^{n-3}\sin(\theta)d\theta=\int_{-1}^1e^{\mu\eta}
(1-\eta^2)^{(n-3)/2}d\eta\\
\\
&\sim \int_{0}^1e^{\mu \eta}(1-\eta)^{(n-3)/2}(1+\eta)^{(n-3)/2}d\eta\sim
\int_{0}^1e^{\mu \eta}(1-\eta)^{(n-3)/2}d\eta\\
\\
&\sim e^{\mu}\int_0^1e^{-\mu \nu} \nu^{(n-3)/2}d\nu
=\frac{e^{\mu}}{\mu^{(n-1)/2}}\int_0^\mu e^{-\rho}\,\rho^{(n-3)/2}\,d\rho\sim \frac{e^{\mu}}{\mu^{(n-1)/2}},
\endaligned
$$
which yields \eqref{step1}.
\end{section}

\end{document}